\title{Fixation for $\mathcal{U}$-Ising and $\mathcal{U}$-voter dynamics with frozen vertices}
\author{Laure Mar{\^e}ch\'e}
\email{laure.mareche@math.unistra.fr}
\address{Institut de Recherche Mathématique Avancée, 
UMR 7501 Université de Strasbourg et CNRS, 
7 rue René Descartes, 67000 Strasbourg, France}
\theoremstyle{plain}
\newtheorem{theorem}{Theorem}
\newtheorem{lemma}[theorem]{Lemma}
\newtheorem{proposition}[theorem]{Proposition}
\newtheorem{claim}[theorem]{Claim}
\theoremstyle{definition}
\newtheorem{definition}[theorem]{Definition}
\theoremstyle{remark}
\newtheorem{remark}[theorem]{Remark}
\begin{document}

\begin{abstract}
The zero-temperature stochastic Ising model is a special case of the famous stochastic Ising model of statistical mechanics, and the voter model is another classical model in this field. In both models, each vertex of the graph $\mathds{Z}^d$ can have one of two states, and can change state to match the state of its neighbors. In 2017, Morris \cite{Morris2017} proposed generalizations of these models, the $\mathcal{U}$-Ising and $\mathcal{U}$-voter dynamics, in which a vertex can change state to match the state of certain subsets of vertices near it. These generalizations were inspired by similar generalizations in the related model of bootstrap percolation, where Balister, Bollob\'as, Duminil-Copin, Morris, Przykucki, Smith and Uzzell \cite{Balister_et_al2016,Bollobas_et_al2017,Bollobas_et_al2015} were able to establish a very impressive universality classification of the generalized models. However, there have been very few results on the $\mathcal{U}$-Ising and $\mathcal{U}$-voter dynamics. The only one is due to Blanquicett \cite{Blanquicett2021}, who obtained a few encouraging advances on the important question of \emph{fixation}, which is only partially solved for the zero-temperature stochastic Ising model: will all vertices eventually settle on a given state or will they oscillate forever between the two states? In this work, we tackle a question which was solved for the zero-temperature stochastic Ising model by Damron, Eckner, Kogan, Newman and Sidoravicius \cite{Damron_et_al2014}: fixation when a fraction of the vertices of $\mathds{Z}^d$ are frozen in one of the states. For $d=1$ and $2$, in most cases we prove that if all frozen vertices are in the same state, all vertices eventually settle at this state. Moreover, if vertices can be frozen in both states but the proportion of vertices frozen in the second state is small enough, we were able to establish a universality classification identifying the models in which some vertices oscillate forever and those in which all vertices eventually settle in one state. 
\end{abstract}

\maketitle

\noindent\textbf{MSC2020:} Primary 60K35; Secondary 82C20, 82C22, 60J27.
\\
\textbf{Keywords:} $\mathcal{U}$-Ising dynamics, $\mathcal{U}$-voter dynamics, fixation, frozen vertices, universality classification, zero-temperature stochastic Ising model, voter model, bootstrap percolation.

\section{Introduction}

The \emph{voter model} is a statistical mechanics model introduced by Holley and Liggett in \cite{Holley_et_al1975}, which can represent the evolution of the opinion of a population of individuals which are influenced by their neighbors, and tend to change their opinion to agree with them. In the most basic form of the voter model, individuals are represented by vertices of the graph $\mathds{Z}^d$, called \emph{sites}. Each site can have one of two opinions, called \emph{states}, which we denote by $+$ and $-$, and each site, at rate 1, randomly chooses the state of one of its $2d$ neighbors and adopts it. This model is simple, yet interesting, and it received a lot of attention (see \cite{Liggett1999}, especially the Notes and References part).

A closely related model is the \emph{zero-temperature stochastic Ising model}, a special case of the famous stochastic Ising model, which describes the behavior of a magnetic material. In the zero-temperature stochastic Ising model, each vertex of $\mathds{Z}^d$ can be either at state $+$ or at state $-$, and tries to agree with its neighbors, but the dynamics is different. A site will change state at rate 1 if more than $d$ of its neighbors have the opposite state, at rate $1/2$ if exactly $d$ of its neighbors have the opposite state, and will never change state otherwise. This model was also studied a lot (see the review in Section 5 of \cite{Morris2017}, as well as \cite{Lacoin_et_al2013} and the references within it).

One of the most important questions about the zero-temperature stochastic Ising model is that of fixation. For a given site, is there a time after which it remains forever in state $+$ (or $-$), in which case we say it \emph{fixates at $+$} (respectively at $-$) at this time? Or is there always a later time at which the site changes state, in which case it is called a \emph{flipper}? If all vertices are initially independently at $+$ with some probability $p \in (0,1)$, in dimension 1 the results of Arratia \cite{Arratia1983} prove that almost surely all sites are flippers. However, in higher dimension, the problem is harder. It is conjectured that for $d \geq 2$, if $p=1/2$ almost surely all sites are flippers, and if $p>1/2$ almost surely all sites fixate at $+$ (since the model is symmetric, this would imply that for $p<1/2$ almost surely all sites fixate at $-$), but this conjecture is far from proven. Almost sure fixation at $+$ for all sites was only proven for $p$ close to 1, by Fontes, Schonmann and Sidoravicius \cite{Fontes_et_al2002}, and the fact that almost surely all sites are flippers for $p=1/2$ was proven only in dimension 2, by Nanda, Newman and Stein \cite{Nanda_et_al2000}. 

In order to better understand the zero-temperature stochastic Ising model, Chalupa, Leath and Reich \cite{Chalupa_et_al_1979} introduced another model they called \emph{bootstrap percolation}. In bootstrap percolation, any site of $\mathds{Z}^d$ can be either \emph{healthy} or \emph{infected}, infected sites ramain infected, and a healthy site becomes infected when it has at least a given number $r$ of infected neighbors. Bootstrap percolation is very helpful in the study of the zero-temperature stochastic Ising model, because it can be seen that the vertices that can be set at $+$ in the latter are those that can be infected by the former when $r=d$ and the initial infected sites are the sites initially at $+$. Bootstrap percolation turned out to be very interesting in itself, and a lot of work has been done on it (see Sections 2 to 4 of \cite{Morris2017} for a review).

A natural generalization of bootstrap percolation is the following, often called \emph{$\mathcal{U}$-bootstrap percolation}. For any positive integer $d$, an \emph{update family} on $\mathds{Z}^d$ will be a collection $\mathcal{U}=\{X_1,...,X_m\}$ such that $m$ is a positive integer and for each $i \in \{1,...,m\}$, $X_i$ is a finite nonempty subset of $\mathds{Z}^d \setminus \{0\}$. The $X_i$, $i \in \{1,...,m\}$ are called \emph{update rules}. Let $\mathcal{U}$ be such an update family, it allows us to define the following $\mathcal{U}$-bootstrap percolation dynamics. Each site of $\mathds{Z}^d$ can still be either \emph{healthy} or \emph{infected}, and sites change state in discrete time according to the following rules: for each positive integer $t$, a site that was infected at time $t-1$ remains infected at time $t$, and a site $x$ that was healthy at time $t-1$ is infected at time $t$ if and only if there exists $X \in \mathcal{U}$ so that all sites of $x+X$ were infected at time $t-1$. The bootstrap percolation model with $r$ neighbors is the $\mathcal{U}$-bootstrap percolation dynamics corresponding to $\mathcal{U}=\{$sets of $r$ neighbors of the origin$\}$.

The diversity of the possible update families makes $\mathcal{U}$-bootstrap percolation hard to study. However, Balister, Bollob\'as, Duminil-Copin, Morris, Przykucki, Smith and Uzzell, in their impressive works \cite{Balister_et_al2016,Bollobas_et_al2017,Bollobas_et_al2015}, were able to establish a universality classification of the two-dimensional update families, dividing them into \emph{supercritical}, \emph{critical} and \emph{subcritical} families and characterizing the behavior of $\mathcal{U}$-bootstrap percolation for each class of update families. Later, Balister, Bollobás, Hartarsky, Morris, Smith and Szabó \cite{Balister_et_al2022lower,Balister_et_al2022subcritical,Balister_et_al2022upper,Hartarsky_et_al2022} proved a similar classification in higher dimension. The update family corresponding to the bootstrap percolation with $d$ neighbors, hence to the classical zero-temperature stochastic Ising model, belongs to the critical class.

In light of this understanding of $\mathcal{U}$-bootstrap percolation, Morris \cite{Morris_preprint} introduced generalizations of the voter model and of the zero-temperature stochastic Ising model in the spirit of $\mathcal{U}$-bootstrap percolation, called \emph{$\mathcal{U}$-voter dynamics} and \emph{$\mathcal{U}$-Ising dynamics}. In both models, each site $x \in \mathds{Z}^d$ can be in state $+$ or in state $-$, and attempts to change its state at rate 1. In the $\mathcal{U}$-Ising dynamics, at each attempt, if there exists $X \in \mathcal{U}$ so that all sites in $x+X$ have the state opposite to the state of $x$, then $x$ flips its state to agree with them; otherwise, nothing happens. In the $\mathcal{U}$-voter dynamics, at each attempt, an update rule $X \in \mathcal{U}$ is chosen uniformly at random. If all sites in $x+X$ are in the same state and disagree with the state of $x$, $x$ changes its state to agree with them, otherwise nothing happens. 

The question of fixation in the $\mathcal{U}$-Ising and $\mathcal{U}$-voter dynamics was asked by Morris in \cite{Morris_preprint}, but even less is known about it than about fixation in the classical zero-temperature stochastic Ising model. The only work in this direction is that of Blanquicett \cite{Blanquicett2021}, which showed that for some two-dimensional critical update families, if sites are initially independently at $+$ with a probability close enough to 1, then almost surely all sites fixate at $+$. Obtaining results about fixation for all update families is expected to be even harder than for the classical zero-temperature stochastic Ising model.

In this paper, we tackle a fixation problem which was solved for the zero-temperature stochastic Ising model and study it for the $\mathcal{U}$-Ising and $\mathcal{U}$-voter dynamics. This question is that of fixation when there exists \emph{frozen vertices} which cannot change state. 
\begin{definition}
The $\mathcal{U}$-Ising and $\mathcal{U}$-voter dynamics with frozen vertices are continuous-time dynamics on $\{-,+\}^{\mathds{Z}^d}$, defined as follows. Let $\rho^+,\rho^-\geq0$ be so that $\rho^++\rho^-<1$. The sites of $\mathds{Z}^d$ are independently \emph{frozen at $+$} with probability $\rho^+$, \emph{frozen at $-$} with probability $\rho^-$, and \emph{unfrozen} with probability $1-\rho^+-\rho^-$. The initial states of the unfrozen sites are defined according to a distribution $\mu$, which may depend on the choice of the frozen vertices. Independently from the choice of the frozen vertices and the initial states, independently for each $x\in\mathds{Z}^d$, we consider a Poisson point process with intensity 1, the \emph{clock} at $x$. When the clock at $x$ has a point at some $t$, we say the clock at $x$ \emph{rings at time $t$}. Frozen vertices always remain in the state they are frozen at, and the states of the unfrozen vertices evolve as follows.
 \begin{itemize}
 \item \emph{$\mathcal{U}$-Ising dynamics with frozen vertices:} for each $x \in \mathds{Z}^d$ unfrozen, when the clock at $x$ rings, if there exists $X \in \mathcal{U}$ so that all sites in $x+X$ have the state opposite to the state of $x$, then $x$ flips its state to agree with them; otherwise, nothing happens. 
 \item \emph{$\mathcal{U}$-voter dynamics with frozen vertices:} for each $x \in \mathds{Z}^d$ unfrozen, when the clock at $x$ rings, an update rule $X \in \mathcal{U}$ is chosen uniformly at random, independently from everything else. If all sites in $x+X$ are in the same state and disagree with the state of $x$, $x$ changes its state to agree with them, otherwise nothing happens. 
 \end{itemize} 
 For both dynamics, we denote by $\mathds{P}$ the joint law of the frozen vertices, initial states, clock rings, and choices of update rules (in the $\mathcal{U}$-voter case). In the following, ``almost surely'' will always mean ``almost surely with respect to $\mathds{P}$''. 
\end{definition}
The fact that these dynamics are well defined is not obvious, but is classical, and one can use the arguments in part 4.3 of \cite{Swart2017} to prove it. Given the frozen vertices, both $\mathcal{U}$-Ising and $\mathcal{U}$-voter dynamics with frozen vertices are Markov.

The classical zero-temperature stochastic Ising model with frozen vertices on $\mathds{Z}^d$ was studied by Damron, Eckner, Kogan, Newman and Sidoravicius in \cite{Damron_et_al2014} (see also \cite{Damron_et_al2015}, where a dynamics with a single frozen vertex is considered). They proved that if $\rho^+>0$ and $\rho^-=0$ (no sites frozen at $-$), even if the probability that a site is frozen at $+$ is very small and for any initial states of the unfrozen sites, almost surely all sites fixate at $+$. They also showed that for any $\rho^+>0$, if $\rho^-$ is small enough, the connected components of sites that do not fixate at $+$ (that is flippers and sites that fixate at $-$) are almost surely finite. This is in contrast with the dynamics without frozen vertices, for which \cite{Fontes_et_al2002} implies that all sites fixate at $-$ if the initial probability that a site is at $+$ is small enough.

In this work, we generalize the results of \cite{Damron_et_al2014} and improve on them in both the $\mathcal{U}$-Ising and the $\mathcal{U}$-voter dynamics. In the two-dimensional case, we managed to extend the findings not only to the class of dynamics which can be expected to behave like the classical zero-temperature stochastic Ising model, that is those with critical update families, but also to all those with subcritical update families and some with supercritical update families. We first prove that if $\rho_-=0$, almost surely all sites fixate at $+$ (Theorem \ref{thm_fixation}). Moreover, we show that if $\rho^-$ is small enough, there exists a \emph{deterministic} time so that sites that have not fixated at $+$ at this time form finite connected components (Theorem \ref{thm_connected_components}). This result is new even for the classical zero-temperature stochastic Ising model, and highlights the difference with the dynamics without frozen sites, since Camia, De Santis and Newman \cite{Camia_et_al2002} proved that for the zero-temperature stochastic Ising model without frozen sites, at any time the connected components of $+$ are finite almost surely. Finally, we study the possible existence of flippers. In the zero-temperature stochastic Ising model, if $\rho^->0$ it is not hard to see, as \cite{Damron_et_al2014} did, that there is almost surely an infinite number of flippers. However, this is not the case for all update families. We were able to establish a universality classification sorting the update families into two classes so that if the update family belongs to the first class there is almost surely an infinite number of flippers, but if it belongs to the second class, when $\rho^->0$ is small enough, almost surely there is no flipper (Theorem \ref{thm_flippers}). We also deal with the one-dimensional case, which is simpler but not trivial, and prove similar results (Theorems \ref{thm_fixation_dim1} and \ref{thm_flippers_dim1}).

The proofs partly rely on the important idea already present in \cite{Damron_et_al2014} that if a suitably chosen polygon has sites frozen at $+$ around its corners, once the polygon is filled with $+$ by the dynamics, all the vertices inside have fixated at $+$. However, this only works for non-supercritical update families, and even then the implementation of this idea is notably more difficult and technical that in \cite{Damron_et_al2014} because of the great variety of the update families. Dealing with supercritical update families and proving there are no flippers for a portion of the update families requires entirely novel arguments, outlined in Section \ref{sec_outline}.

This work unfolds as follows. We begin in Section \ref{sec_results} by giving more notation, stating the results and sketching the proofs. Section \ref{sec_good_droplet} contains the construction of the polygons used for the non-supercritical update families. In Section \ref{sec_good_blocks}, we prove that the probability a given region of $\mathds{Z}^2$ is susceptible of easily fixating at $+$ is high. In Section \ref{sec_fixation} we show our fixation results in the two-dimensional case (Theorems \ref{thm_fixation} and \ref{thm_connected_components}). In Section \ref{sec_flippers} we determine which two-dimensional update families exhibit flippers (Theorem \ref{thm_flippers}). Finally, in Section \ref{sec_dim1} we deal with the one-dimensional case (Theorems \ref{thm_fixation_dim1} and \ref{thm_flippers_dim1}). 

\section{Notation and results}\label{sec_results}

Since the one-dimensional case is the simplest one, we begin by spelling out our results for this case, in Subsection \ref{sec_results_dim1}. We then state our two-dimensional results in Subsection \ref{sec_results_dim2}. In Subsection \ref{sec_outline}, we give an outilne of our proofs. Finally, in Subsection \ref{sec_results_notation} we gather some notation that will be used throughout the paper.

\subsection{One-dimensional case}\label{sec_results_dim1}

 This case is much simpler than the two-dimensional one, but it still demands some arguments. One first needs to see the connection between the $\mathcal{U}$-Ising and $\mathcal{U}$-voter dynamics and the $\mathcal{U}$-bootstrap percolation with the same update family. One can see by recursion that each site that can be at state $+$ at some time in the $\mathcal{U}$-Ising or $\mathcal{U}$-voter dynamics can be infected by the $\mathcal{U}$-bootstrap percolation with initially infected sites the sites initially at state $+$. Indeed, if a site $x$ switches its state to $+$, there is an update rule $X$ so that the sites of $x+X$ were at $+$ before the switch. Therefore, for fixation at $+$ to be possible for all sites of $\mathds{Z}$, this $\mathcal{U}$-bootstrap percolation process must be able to infect all sites in $\mathds{Z}$, and this depends on the properties of the update family. One-dimensional update families are classified as follows.
 
 \begin{definition}
 A one-dimensional update family $\mathcal{U}$ is called  
 \begin{itemize}
  \item \emph{supercritical} if there exists $X \in \mathcal{U}$ so that $X \subset \{1,2,...\}$ or $X \subset \{...,-2,-1\}$;
  \item \emph{subcritical} otherwise.
 \end{itemize}
 \end{definition}
 
 If the update family is subcritical, the $\mathcal{U}$-bootstrap percolation process is unable to infect all sites of $\mathds{Z}$, since when we have a large interval of initially healthy sites, no site of this interval can be infected. Consequently, for subcritical update families, the $\mathcal{U}$-Ising and the $\mathcal{U}$-voter dynamics will be unable to set at $+$ all sites of $\mathds{Z}$. We thus study fixation at $+$ only for supercritical update families. If there are no sites frozen at $-$, we were able to prove that fixation at $+$ occurs for all sites of $\mathds{Z}$, which is the following result.
 
 \begin{theorem}\label{thm_fixation_dim1}
If $\mathcal{U}$ is a supercritical one-dimensional update family, in the $\mathcal{U}$-Ising and the $\mathcal{U}$-voter dynamics with frozen vertices, if $0 < \rho^+ <1$ and $\rho^-=0$, for any choice of initial distribution $\mu$, almost surely all sites fixate at $+$.
\end{theorem}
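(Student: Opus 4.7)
The plan is to use the supercriticality hypothesis to locate, around any prescribed site, a finite ``corridor'' bounded by two long frozen-$+$ blocks inside which the dynamics reduces to a finite-state continuous-time Markov chain whose only absorbing configuration is the all-$+$ state.

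By supercriticality, fix $X^* \in \mathcal{U}$ with $X^* \subset \mathds{N}^*$ (after reflecting $\mathds{Z}$ if necessary), and set $R = \max X^*$ and $M = \max \{|\xi| : \xi \in X, \, X \in \mathcal{U}\}$, so $R \leq M$. For each $n \in \mathds{Z}$, let $A_n$ be the event that the sites $nM, nM+1, \ldots, nM+M-1$ are all frozen at $+$; these events are independent (because the freezing variables are independent) and each has probability $(\rho^+)^M > 0$. By the Borel--Cantelli lemma, $A_n$ occurs for infinitely many $n > 0$ and for infinitely many $n < 0$ almost surely. Hence for every $x \in \mathds{Z}$, there almost surely exist (random) integers $z_L, z_R$ with $z_L + M \leq x \leq z_R - 1$ such that $B_L = \{z_L, \ldots, z_L+M-1\}$ and $B_R = \{z_R, \ldots, z_R+M-1\}$ are entirely frozen at $+$. (If $x$ already lies in some frozen block then it is trivially fixated, so assume $x \in I := \{z_L+M,\ldots,z_R-1\}$.)

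Consider the restriction of the dynamics to $I$. Every rule $X \in \mathcal{U}$ satisfies $X \subset \{-M,\ldots,M\}\setminus\{0\}$ and $|B_L| = |B_R| = M$, so for each $y \in I$ the set $y+X$ is contained in $B_L \cup I \cup B_R$; because the sites of $B_L \cup B_R$ are permanently $+$, the states on $I$ evolve according to a continuous-time Markov chain on the finite space $\{-,+\}^I$. The all-$+$ configuration is absorbing (every $y+X$ is entirely $+$, so no site can flip to $-$). To see it is the only absorbing configuration, let $\eta$ be absorbing with some site of $I$ in state $-$, and let $y$ be the \emph{rightmost} such site: by maximality of $y$ and the freezing of $B_R$, every site of $\{y+1,\ldots,z_R+M-1\}$ is $+$, and since $R \leq M$ the set $y+X^* \subset \{y+1,\ldots,y+R\}$ is entirely $+$; hence $X^*$ causes $y$ to flip at positive rate ($1$ in the $\mathcal{U}$-Ising dynamics, $1/|\mathcal{U}|$ in the $\mathcal{U}$-voter dynamics), contradicting that $\eta$ is absorbing. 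The same reasoning, applied step by step by flipping $z_R-1, z_R-2, \ldots, z_L+M$ in succession via $X^*$, produces a positive-probability path to the all-$+$ configuration from any initial state. Standard finite-state continuous-time Markov chain theory then gives almost sure absorption at all-$+$, so $x$ fixates at $+$ almost surely; a countable union over $x \in \mathds{Z}$ completes the proof.

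The main technical point in this plan is to ensure that the dynamics on the corridor is truly autonomous, which forces the shielding blocks to have length at least $M$. A pleasant consequence of examining the \emph{rightmost} $-$ site (rather than the leftmost) is that only the single directional rule $X^*$ provided by supercriticality is needed, both to rule out non-trivial absorbing states and to establish reachability; the same proof therefore covers supercritical families regardless of whether they also contain rules pointing to the left.
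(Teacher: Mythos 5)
Your proposal is correct and follows essentially the same route as the paper: locate frozen-$+$ blocks of length equal to the range of $\mathcal{U}$ on both sides of $x$, observe that the corridor between them evolves autonomously and that the all-$+$ configuration is absorbing, and use the directional rule $X^*\subset\mathds{N}^*$ to flip the $-$ sites from right to left. The only (cosmetic) difference is the last step, where you invoke finite-state continuous-time Markov chain absorption theory while the paper explicitly builds positive-probability events in unit time intervals and applies a Borel--Cantelli-type argument.
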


If there are sites frozen at $-$, one cannot expect fixation at $+$ for all sites of $\mathds{Z}$. However, it is not \emph{a priori} obvious whether there will be flippers that change state an infinite number of times, or if all sites end up fixating either at $+$ or at $-$. This actually depends on the update family, and we were able to show the following universality classification which describes the possible behaviors and is valid for both subcritical and supercritical update families.

\begin{theorem}\label{thm_flippers_dim1}
If $\mathcal{U}$ is a one-dimensional update family, in the $\mathcal{U}$-Ising and the $\mathcal{U}$-voter dynamics with frozen vertices, if $0 < \rho^+ <1$ and $0 < \rho^- < 1-\rho^+$, for any choice of initial distribution $\mu$, there are two possible cases:
\begin{itemize}
\item if $\mathcal{U}$ contains two disjoint update rules, almost surely there is an infinite number of flippers;
\item if $\mathcal{U}$ contains no disjoint update rules, almost surely there is no flipper.
\end{itemize}
\end{theorem}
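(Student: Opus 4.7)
I would handle the two branches of the dichotomy separately.

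For the \textbf{disjoint-rules case}, I would use Borel--Cantelli on favorable frozen configurations. Pick $X_1, X_2 \in \mathcal{U}$ with $X_1 \cap X_2 = \emptyset$, and $K$ with $X_1 \cup X_2 \subset [-K,K]$. For $n \in \mathds{N}$, let $A_n$ be the event that every site of $3Kn + X_1$ is frozen at $+$ and every site of $3Kn + X_2$ is frozen at $-$; the $A_n$ depend on pairwise disjoint sets of sites, hence are independent with common positive probability $(\rho^+)^{|X_1|}(\rho^-)^{|X_2|}$, and Borel--Cantelli gives that $A_n$ holds for infinitely many $n$ almost surely. On $A_n$, a rule is always ready to flip the state of $3Kn$ ($X_1$ when it is at $-$, $X_2$ when at $+$), so in the $\mathcal{U}$-Ising dynamics the site flips at every clock ring; in the $\mathcal{U}$-voter dynamics it flips with probability at least $2/|\mathcal{U}|$ at each clock ring, hence infinitely often almost surely by a second Borel--Cantelli. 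Each good $n$ yields a distinct flipper.

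For the \textbf{no-disjoint-rules case}, the key combinatorial observation to establish first is: if $x$ flips at consecutive times $t<t'$ using rules $X, Y \in \mathcal{U}$, then for every $y \in X \cap Y$ (nonempty by hypothesis), the site $x+y$ has opposite states just before $t$ and just before $t'$, hence must flip at least once in $(t,t')$. Applied to an alleged flipper $x$, a pigeonhole over the infinitely many consecutive flip pairs and the finite set $\bigcup_{X \in \mathcal{U}} X$ yields some $y^\star$ in this union such that $x + y^\star$ is also a flipper. Iterating, any flipper produces an infinite sequence of flippers along a walk on $\mathds{Z}$ whose steps lie in $\bigcup_{X \in \mathcal{U}} X \subset [-K, K]$.

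To conclude I would exploit \emph{frozen barriers}: by ergodicity, intervals of $K$ consecutive sites all frozen at $+$ (each of probability $(\rho^+)^K > 0$) occur arbitrarily far to the left and right of any fixed site almost surely, and steps of magnitude at most $K$ cannot cross such a barrier, confining the chain of flippers to a bounded random inter-barrier interval. Using translation invariance and ergodicity (the set of flippers is either empty almost surely or of positive density almost surely), I would isolate the leftmost flipper in this bounded interval and derive a contradiction by running the walk forward. The main obstacle is precisely this last step: a forward walk confined to a finite region is a priori consistent with flippers being present there, so a refined argument is needed. I expect to have to iterate the combinatorial observation \emph{backward in time along flip events} rather than merely forward along flippers, producing an infinite decreasing sequence of flip times within a fixed window $[0,T]$ at sites in the bounded inter-barrier interval, which contradicts the finiteness of Poisson clock rings at finitely many sites in finite time. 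The voter version is obtained by the same scheme, with clock rings replaced by clock rings selecting the relevant rule, which only absorbs a factor $1/|\mathcal{U}|$ into the intensities.
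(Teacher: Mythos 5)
Your first case is correct and is essentially the paper's argument (adding only that the site $3Kn$ itself must be unfrozen, which is free since $0\notin X_1\cup X_2$), and your frozen-barrier observation is exactly the lemma the paper uses to confine the dynamics to finite intervals $\{x_\ell,\dots,x_r\}$ isolated by blocks of $r$ consecutive sites frozen at $+$. Your forward combinatorial observation (between two consecutive flips of $x$ using rules $X,Y$, every site $x+y$ with $y\in X\cap Y$ must flip) is also correct and, via pigeonhole, does show that the flipper set is closed under a bounded-step walk confined between barriers. But, as you yourself anticipate, this is where the proof stalls, and the repair you propose does not work.

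The gap is in the backward-in-time step. Tracing a flip to $-$ of $x$ at time $t'$ back to an intermediate flip of some $x+y$ at a time $s\in(t,t')$ only works when $x$ was itself flipped to $+$ by the dynamics at an earlier time $t$; the backward chain therefore terminates as soon as it reaches a site whose relevant state was present \emph{initially} (or a first flip of an initially-$+$ site), rather than producing infinitely many flip events in a bounded time window. Combinatorially, nothing forbids a finite set of sites from flipping forever in a cyclic pattern each of whose flips is "caused" by the previous one (e.g.\ two sites alternating, each consecutive pair of flips of one being separated by a flip of the other); ruling this out requires probability, not just the intersection property. The paper's proof of the two-dimensional analogue (Theorem \ref{thm_flippers}), which the one-dimensional proof invokes verbatim with $\{x_\ell,\dots,x_r\}$ in place of the finite component of blocks, supplies the missing ingredient: for each $\ell\in\mathds{N}^*$ one builds a "reset" event $\mathcal{G}_\ell$ on the time interval $(T_0+\ell-1,T_0+\ell]$ in which first all sites of the component reachable by the $\ominus$-bootstrap percolation from the current $-$ sites are switched to $-$ in bootstrap order, and then all sites reachable by the $\oplus$-bootstrap percolation from the current $+$ sites are switched to $+$ in bootstrap order. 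Each $\mathcal{G}_\ell$ has conditional probability bounded below by a constant $\delta>0$ (finitely many prescribed clock rings and rule choices in unit time), so almost surely some $\mathcal{G}_\ell$ occurs; after it does, any would-be flipper has been re-put at $+$ with a witness $x+X$ entirely at $+$, and the "first site of the component to switch back to $-$" would need a rule $X'$ entirely at $-$ while $x+X'$ meets $x+X$ — a contradiction with the no-disjoint-rules hypothesis. You need this probabilistic reset (or an equivalent positive-probability event forcing fixation in each finite inter-barrier interval) to close the second case.
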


\subsection{Two-dimensional case}\label{sec_results_dim2}

This case is much more complex than the one-dimensional case, in part because update families are more diverse. In order to state our results, we need to explain the classification of the two-dimensional update families introduced in \cite{Bollobas_et_al2015} by Bollobás, Smith and Uzzell. Let $\mathcal{U}$ be an update family on $\mathds{Z}^2$. We denote $\langle \cdot,\cdot\rangle$ the scalar product on $\mathds{R}^2$. For any direction $u \in S^1$, for any $a \in \mathds{R}$, we define $\mathds{H}_u=\{x \in \mathds{R}^2\,|\,\langle x,u \rangle <0 \}$ as the open half-plane opposed to $u$. We say $u$ is a \emph{stable direction} for $\mathcal{U}$ when there exists no $X\in \mathcal{U}$ so that $X \subset \mathds{H}_u$; otherwise $u$ is called \emph{unstable}. We then have the following.

\begin{definition}
The update family $\mathcal{U}$ on $\mathds{Z}^2$ is called 
\begin{itemize}
\item \emph{supercritical} if there exists an open semicircle of unstable directions;
\item \emph{critical} if it is not supercritical, but there exists a semicircle containing a finite number of stable directions;
\item \emph{subcritical} otherwise.
\end{itemize}
\end{definition}

As in the one-dimensional case, a site can be set at $+$ by the $\mathcal{U}$-Ising or the $\mathcal{U}$-voter dynamics only if it can be infected in the $\mathcal{U}$-bootstrap percolation with initially infected sites the sites initially at $+$. Moreover, it was proven in \cite{Balister_et_al2016} by Balister, Bollobás, Przykucki and Smith that if $\mathcal{U}$ is subcritical, there exists $q_c(\mathcal{U})>0$ so that if sites are initially independently infected with probability $q$, for $q < q_c(\mathcal{U})$ the $\mathcal{U}$-bootstrap percolation is unable to infect all sites of $\mathds{Z}^2$, while if $q>q_c(\mathcal{U})$ it is able to\footnote{In \cite{Balister_et_al2016}, $q_c(\mathcal{U})$ was defined as $\inf\{q\in[0,1]\,|\,p(q)\geq 1/2\}$, where $p(q)$ is the probability $\mathcal{U}$-bootstrap percolation can infect all sites of $\mathds{Z}^2$ if sites are initially independently infected with probability $q$. However, the event that all sites of $\mathds{Z}^2$ can be infected is invariant by translation, so standard ergodicity arguments (along the lines of Proposition 7.3 of \cite{Lyons_Peres} for example) imply $p(q)$ is always 0 or 1, and $p(q)$ can only increase with $q$ since bootstrap percolation is monotone (see Section \ref{sec_results_notation}), therefore we have $p(q)=0$ for $q < q_c(\mathcal{U})$ and $p(q)=1$ for $q > q_c(\mathcal{U})$.}. Therefore we do not study subcritical models if the probability $\rho^+$ that a site is frozen at $+$ is smaller than $q_c(\mathcal{U})$. Actually, technical reasons prevent us to prove our results for all $\rho^+>q_c(\mathcal{U})$, so, like many results for subcritical update families, ours will hold for $\rho^+>\tilde q_c(\mathcal{U})$, where $\tilde q_c(\mathcal{U}) \geq q_c(\mathcal{U})$ was introduced by Hartarsky in \cite{Hartarsky2018} and is conjectured to be equal to $q_c(\mathcal{U})$. The definition of $\tilde q_c(\mathcal{U}) $ is as follows (we give it for completeness, but it is not necessary to understand our results). For any $q\in[0,1]$, $n\in\mathds{N}$, let $\theta_n(q)$ be the probability for the origin to remain always healthy in the $\mathcal{U}$-bootstrap percolation dynamics when all sites of $\{-n,...,n\}^2$ are initially independently infected with probability $q$ and no other site is initially infected. Then 
\[
\tilde q_c(\mathcal{U})=\inf\left\{q \in [0,1] \,\left|\, \sum_{n\in\mathds{N}}n\theta_n(q)<+\infty\right.\right\}.
\]
For this reason, we will work when the following condition holds:
\[
(\mathcal{C}) : \qquad\qquad\begin{array}{rl}
& \mathcal{U}\text{ is subcritical and }\tilde q_c(\mathcal{U})<\rho^+<1, \\
 \text{or} & \mathcal{U}\text{ is critical and }0 <\rho^+ <1, \\
  \text{or} & \mathcal{U}\text{ is supercritical, contains no disjoint update rules and }0 < \rho^+ <1.
  \end{array}
\]
When there are no sites frozen at $-$, we have the following result of fixation at $+$ for all sites of $\mathds{Z}^2$. 

\begin{theorem}\label{thm_fixation}
Let $\mathcal{U}$ be a two-dimensional update family. In the $\mathcal{U}$-Ising and the $\mathcal{U}$-voter dynamics with frozen vertices, if condition $(\mathcal{C})$ holds, then if $\rho^-=0$, for any choice of initial distribution $\mu$, almost surely all sites fixate at~$+$.
\end{theorem}

\begin{remark}
There are a number of interesting supercritical update families with no disjoint update rules, for example $\mathcal{U}=\{\{(-1,0),(-1,1)\},\{(-1,0),(-1,-1)\}\}$ (see Figure \ref{fig_family}). For the other class of supercritical update families, those containing two disjoint update rules, it is not clear whether fixation occurs. Indeed, as proven in Section 5.3 of \cite{Bollobas_et_al2015}, one can find a finite set of sites which, if initially infected, allows the $\mathcal{U}$-bootstrap percolation dynamics to propagate the infection to an infinite number of sites. Therefore, even if there is only a finite set of sites at $-$ in the $\mathcal{U}$-Ising or the $\mathcal{U}$-voter dynamics, this dynamics can potentially propagate the $-$ to an unlimited number of sites, by switching to $-$ the sites that get infected in the order they get infected (see Remark \ref{rem_supercritical} for a more detailed construction). 
\end{remark}

\begin{figure}
 \begin{center}
 \parbox{0.3\textwidth}{\center
  \begin{tikzpicture}[scale=0.6]
   \draw (-1.5,1)--(1.5,1);
   \draw (-1.5,0)--(1.5,0);
   \draw (-1.5,-1)--(1.5,-1);
   \draw (1,-1.5)--(1,1.5);
   \draw (0,-1.5)--(0,1.5);
   \draw (-1,-1.5)--(-1,1.5);
   \draw (-1,0) node {$\bullet$};
   \draw (-1,1) node {$\bullet$};
   \draw (0,0) node {$\times$} node [above right] {0};
  \end{tikzpicture}
  
  $\{(-1,0),(-1,1)\}$}
  \parbox{0.3\textwidth}{\center
  \begin{tikzpicture}[scale=0.6]
   \draw (-1.5,1)--(1.5,1);
   \draw (-1.5,0)--(1.5,0);
   \draw (-1.5,-1)--(1.5,-1);
   \draw (1,-1.5)--(1,1.5);
   \draw (0,-1.5)--(0,1.5);
   \draw (-1,-1.5)--(-1,1.5);
   \draw (-1,0) node {$\bullet$};
   \draw (-1,-1) node {$\bullet$};
   \draw (0,0) node {$\times$} node [above right] {0};
  \end{tikzpicture}
  
  $\{(-1,0),(-1,-1)\}$}
 \end{center}
 \caption{A two-dimensional supercritical update family with no disjoint update rules: $\mathcal{U}=\{\{(-1,0),(-1,1)\},\{(-1,0),(-1,-1)\}\}$. The bullets represent the sites contained in the update rules.}
 \label{fig_family}
\end{figure}
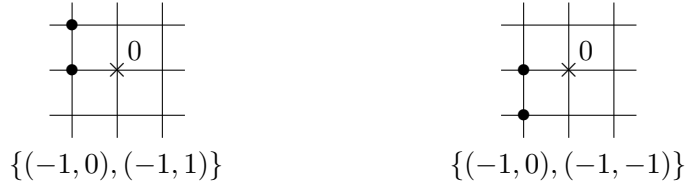

Except when mentioned otherwise, connectedness will be connectedness for the usual graph structure of $\mathds{Z}^2$. When the probability $\rho^-$ that a site is frozen at $-$ is small, we were able to prove the existence of a deterministic time after which the connected components of sites that have not yet fixated at $+$ are finite.

\begin{theorem}\label{thm_connected_components}
Let $\mathcal{U}$ be a two-dimensional update family. In the $\mathcal{U}$-Ising and the $\mathcal{U}$-voter dynamics with frozen vertices, if condition $(\mathcal{C})$ holds, then there exists $\rho_0^-=\rho_0^-(\mathcal{U},\rho^+)>0$ so that for any $0 \leq \rho^- \leq \rho_0^-$, for any choice of initial distribution $\mu$, there exists a deterministic time $T=T(\mathcal{U},\rho^+)<+\infty$ such that almost surely for any $t \geq T$, the connected components of sites that have not fixated at $+$ at time $t$ are finite.
\end{theorem}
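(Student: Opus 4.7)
My plan is to use a renormalization (block/multi-scale) argument that combines the good droplet construction of Section \ref{sec_good_droplet}, the probability estimates of Section \ref{sec_good_blocks}, and a Peierls argument on a coarse-grained lattice. Tile $\mathds{Z}^2$ into blocks of side $L$, to be chosen large. A block $B$ is declared \emph{good} if a bounded enlargement $B^+$ contains a good droplet as built in Section \ref{sec_good_droplet} whose polygon covers $B$, whose corners carry frozen-$+$ sites, and whose interior contains no sites frozen at $-$. By Section \ref{sec_good_blocks} the probability of the ``good droplet'' part is at least $1-\eta/2$ once $L=L(\mathcal{U},\rho^+)$ is chosen large; since the $-$-frozen labels are independent Bernoulli of parameter $\rho^-$, the no-$-$-frozen clause contributes a factor $(1-\rho^-)^{|B^+|}\ge 1-\eta/4$ when $\rho_0^-$ is small enough. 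Thus goodness has probability $\ge 1-\eta$, and by construction depends only on frozen labels inside $B^+$, giving finite-range dependence across the block lattice.

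Next, I would promote goodness to a time-dependent event: a good block is \emph{super-good at time $T_0$} if the dynamics restricted to $B^+$ has filled the droplet with $+$ by time $T_0$. Since the good droplet construction ensures there is a finite sequence of clock rings (in $B^+$) which, starting from any admissible configuration, fills the droplet with $+$ via internal $\mathcal{U}$-bootstrap-type propagation from the frozen-$+$ corners, we have $\mathbb{P}[\tau_B\le T_0\mid B\text{ good}]\to 1$ as $T_0\to\infty$, uniformly in the exterior configuration and in the initial states inside $B^+$ (this uniformity is the key use of the droplet being self-sustaining once its frozen-$+$ skeleton is present). Fix $T_0$ so that the conditional probability is at least $1-\eta$; then super-goodness is determined by frozen sites, clocks, and initial states in $B^+$ up to time $T_0$, hence is still a finite-range dependent field with super-good probability at least $1-2\eta$.

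Choosing $\eta$ small enough (independently of $\rho^-$, $L$, $T_0$), the standard Peierls bound for finite-range-dependent site percolation on $\mathds{Z}^2$ implies that the clusters of super-bad blocks are almost surely finite. I set $T=T_0$, which is a deterministic function of $\mathcal{U}$ and $\rho^+$ only. The crucial structural input from \cite{Damron_et_al2014} (recalled in the introduction) is that once a good droplet is filled with $+$, its interior sites have fixated at $+$ forever; this is a monotone, permanent event. Therefore for every $t\ge T$ each super-good block contains only sites fixated at $+$, and any connected component of sites not yet fixated at $+$ at time $t$ must be contained in a cluster of super-bad blocks, which is almost surely finite. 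The supercritical case (no disjoint update rules) is handled separately by the alternative arguments announced in Section \ref{sec_outline}, but fits into the same block-percolation framework with a different definition of ``good''.

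The main obstacle I expect is confining the dynamics used to fill a good droplet to a bounded enlargement $B^+$, so that super-goodness truly has finite-range dependence: one needs the droplet's filling to proceed only via clocks inside $B^+$ and to be insensitive to whatever happens outside, which forces a careful design of the droplet geometry (in particular, an internal $+$-buffer zone around the droplet's boundary) on top of the construction of Section \ref{sec_good_droplet}. The other delicate point is obtaining the uniformity in the initial configuration for the filling time, which requires exhibiting an explicit clock sequence of bounded length inside $B^+$ that always fills the droplet.
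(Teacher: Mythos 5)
Your proposal follows essentially the same route as the paper: the paper's proof of Theorem \ref{thm_connected_components} goes through Theorem \ref{thm_connected_components_blocks}, which is exactly your block renormalization --- good blocks with high probability via Proposition \ref{prop_good_blocks}, filling events realized by explicit clock sequences confined to a bounded enlargement of each block, permanence of a filled good droplet via Proposition \ref{prop_key} (with the no-disjoint-rules argument replacing it in the supercritical case), and a Peierls path-counting argument on the coarse-grained lattice. The only notable difference is technical: where you invoke a finite-range-dependent percolation bound, the paper instead selects widely spaced blocks along each path so that their enlargements are disjoint and bounds $\mathds{P}(\mathcal{G}_{i,\ell}\mid\mathcal{F}_{\ell-1})\geq\varepsilon$ uniformly, which sidesteps the locality issue you correctly single out as the main obstacle.
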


\begin{remark}
 This cannot hold in the one-dimensional setting, since a single $-$ is enough to break the connexity, and for any time there is a positive probability that a site initially at $-$ has not yet received a clock ring allowing it to switch to $+$ before this time.
\end{remark}

Theorem \ref{thm_connected_components} means that ``most'' sites fixate at $+$, but does not imply all sites fixate at $+$. If $\rho^->0$, we can expect some sites not to fixate at $+$, but it is not obvious whether there will be flippers. We were able to answer this question and to establish a universality classification of the update families describing the possible behaviors, which is the following.

\begin{theorem}\label{thm_flippers}
Let $\mathcal{U}$ be a two-dimensional update family. In the $\mathcal{U}$-Ising and the $\mathcal{U}$-voter dynamics with frozen vertices, if condition $(\mathcal{C})$ holds, then for $0 < \rho^- \leq \rho_0^-$, for any choice of initial distribution $\mu$, there are two possible cases:
\begin{itemize}
\item if $\mathcal{U}$ contains two disjoint update rules, almost surely there is an infinite number of flippers;
\item if $\mathcal{U}$ contains no disjoint update rules, almost surely there is no flipper.
\end{itemize}
\end{theorem}

\begin{remark}
 The $\mathcal{U}$-Ising and $\mathcal{U}$-voter dynamics are symmetric with respect to $+$ and $-$, therefore similar theorems hold with the roles of $+$ and $-$ reversed.
 \end{remark}

\begin{remark}
The results of \cite{Damron_et_al2014} for fixation in the classical zero-temperature stochastic Ising model with frozen vertices are also valid in dimension $d \geq 3$, and a universality classification was proved for $\mathcal{U}$-bootstrap percolation in dimension $d \geq 3$, by Balister, Bollobás, Hartarsky, Morris, Smith and Szabó \cite{Balister_et_al2022lower,Balister_et_al2022subcritical,Balister_et_al2022upper,Hartarsky_et_al2022}. As in the two-dimensional case, update families are sorted according to the structure of the set of their stable directions, which are defined similarly, but with half-spaces instead of half-planes. Despite this, our arguments cannot be extended to higher dimension, even for update families in the same class as that of the classical zero-temperature stochastic Ising model. Let us explain why in the three-dimensional case. In this case, the update family corresponding to the zero-temperature stochastic Ising model is $\mathcal{U}=\{$sets of 3 neighbors of the origin$\}$. For this update family, a cube filled with $+$ whose corners are frozen at $+$ has fixated at $+$, and this fact is the core of the argument of \cite{Damron_et_al2014}. However, if one adds to this update family the update rule $\{(1,-1,0),(-1,1,0)\}$, then the set of stable directions remains the same, but one can see that the sites on the vertical edge of the cube with maximal abscissa and ordinate may switch to $-$ even if the cube is filled with $+$. Therefore even complete knowledge of the set of stable directions is not enough. This suggests that the question of fixation in higher dimension may be much more complex than in the two-dimensional case. 
\end{remark}

\subsection{Outline of the proofs}\label{sec_outline}

We first explain the arguments used for the two-dimensional case. Their cornerstone is finding regions of $\mathds{Z}^2$ so that once the dynamics fills the region by chance, all sites in the region have fixated at $+$. For the classical zero-temperature stochastic Ising model, this was accomplished in \cite{Damron_et_al2014} by noticing that if a square has its four corners frozen at $+$, once the square is filled with $+$, all its sites have fixated at $+$. We extend this idea not only to critical update families (the class corresponding to the classical zero-temperature stochastic Ising model), but also to subcritical update families, by constructing a polygon such that if its ``enlarged corners'' are made of sites frozen at $+$, once the polygon is filled with $+$, all its sites have fixated at $+$. This is done in Section \ref{sec_good_droplet}. For supercritical update families, it is impossible to construct such a polygon (see Remark \ref{rem_supercritical}), hence we need a new mechanism. For this, we notice that if there are no disjoint update rules, when a site $x$ is set at $+$, there exists $X \in \mathcal{U}$ so that $x+X$ is made of sites at $+$. As long as $x+X$ is at $+$, for any $X'\in \mathcal{U}$, since $X$ and $X'$ are not disjoint, $x+X'$ contains at least a site of $x+X$, so $x+X'$ is not entirely at $-$, thus $x$ cannot be set at $-$ before one of the $+$ of $x+X$ disappears. Therefore a site $x$ that is set at $+$ thanks to sites $x+X$ frozen or fixated at $+$ will fixate at $+$, and fixation at $+$ will propagate from the frozen sites. Thanks to these arguments, if the configuration of frozen sites is favorable (enough sites frozen at $+$), we can construct regions whose sites fixate at $+$. 

This will allow us to prove our fixation results, which is done in Section \ref{sec_fixation}. Indeed, we can show that the probability that the configuration of frozen sites is favorable is so high (Proposition \ref{prop_good_blocks}) that most of $\mathds{Z}^2$ will be favorable. Most of $\mathds{Z}^2$ thus fixates rather quickly, which allows to find a deterministic time $T$ such that the connected components of sites that have not fixated at $+$ at time $T$ are finite, which is Theorem \ref{thm_connected_components}. We even obtain a slightly stronger result (Theorem \ref{thm_connected_components_blocks}), which yields that after time $T$, the dynamics only evolves on finite connected components isolated from each other by sites that have fixated at $+$. If $\rho^-=0$, once by chance the dynamics has filled one of these components with $+$, none of its sites can ever filp to $-$ anymore, which allows to prove almost sure fixation at $+$ for all sites (Theorem \ref{thm_fixation}). 

Proving Theorem \ref{thm_flippers} about the existence of flippers requires an additional novel argument, given in Section \ref{sec_flippers}. If there are two disjoint update rules $X$ and $X'$ and if $\rho^->0$, it is easy to see there will be an infinite number of flippers, since a site $x$ with $x+X$ frozen at $+$ and $x+X'$ frozen at $-$ will be a flipper. However, proving that there is no flipper if there are no disjoint update rules is more complex. To do this, we consider the dynamics in each of the aforementioned finite connected components. Then we craft events such that if one of these events occurs, all sites in the component have fixated at $+$, and prove that almost surely one of these events will occur sooner or later. 

In the one-dimensional case, the arguments are similar, but simpler. Indeed, one does not need to construct regions which, once filled with $+$ by the dynamics, have fixated at $+$. Instead, we notice that there is an infinite number of large intervals of sites frozen at $+$, and that their complement is composed of finite connected components that cannot interact with each other. Once we make this observation, the arguments that prove Theorems \ref{thm_fixation} and \ref{thm_flippers} allow to prove Theorems \ref{thm_fixation_dim1} and \ref{thm_flippers_dim1}, which is done in Section \ref{sec_dim1}. 

\subsection{Notation}\label{sec_results_notation}

We gather here some notation which will be used throughout the paper. $\mathds{N}^*$ is the set of positive integers. For any set $A$, we denote by $|A|$ the cardinal of $A$. We recall that $\langle \cdot,\cdot \rangle$ is the scalar product on $\mathds{R}^2$. $\|.\|_2$ will denote the Euclidean norm on $\mathds{R}^2$, and we will use the distance on $\mathds{R}^2$ associated to it. We also denote by $\|.\|_\infty$ the sup norm. For any direction $u \in S^1$, for any $a \in \mathds{R}$, we denote $\mathds{H}_u(a)=au+\mathds{H}_u=\{x \in \mathds{R}^2\,|\,\langle x,u \rangle <a \}$ the translation of the open half-plane $\mathds{H}_u$ by $a$, and $\bar{\mathds{H}}_u(a)=\{x \in \mathds{R}^2\,|\,\langle x,u \rangle \leq a \}$ the corresponding closed half-plane.

Except in Section \ref{sec_dim1}, where the families will be one-dimensional, $\mathcal{U}$ will be an update family on $\mathds{Z}^2$, and we assume condition $(\mathcal{C})$ holds. We define a constant $r=r(\mathcal{U})=\max\{\|x\|_2 \,|\, x \in X, X \in \mathcal{U}\}$ which represents the range of $\mathcal{U}$. We also define $\mathcal{U}$-bootstrap percolation on a domain $D \subset \mathds{Z}^2$, which will always have \emph{healthy boundary conditions}: the dynamics is the same as the $\mathcal{U}$-bootstrap percolation dynamics explained in the Introduction, except that sites in $\mathds{Z}^2 \setminus D$ always remain healthy. From now on, the $\mathcal{U}$-bootstrap percolation with respect to the update family $\mathcal{U}$ will simply be called ``bootstrap percolation''. We notice that the bootstrap percolation dynamics is \emph{monotone}: if one has two sets of sites $A_1 \subset A_2$, and considers a first dynamics where the initially infected sites are the sites in $A_1$, and a second dynamics in which the initially infected sites are the sites in $A_2$, then it can be checked by recursion that for any integer $t$, the sites infected at time $t$ in the first dynamics are also infected at time $t$ in the second dynamics. 

In the remainder of this paper, we will work with the $\mathcal{U}$-voter dynamics, since the arguments for the $\mathcal{U}$-Ising dynamics are similar and simpler. If at some time a given site is at $+$ (respectively at $-$) and it is impossible for it to switch to $-$ (respectively to $+$) again without having two clocks ring at the same time (which has probability 0), we say the site is \emph{well fixed at $+$} (respectively at $-$) at this time. Almost surely, once a site is well fixed, it has fixated. For all $t \geq 0$, we denote by $\mathcal{F}_t$ the $\sigma$-algebra generated by the choice of the frozen sites, initial states of the sites, and by the clock rings and choices of update rules occurring in the time interval $[0,t]$.  

\section{Non-supercritical update families: good droplets} \label{sec_good_droplet}

In this section, we assume the update family $\mathcal{U}$ is not supercritical. The section is devoted to the construction of a polygon so that if its ``enlarged corners'' are frozen at $+$, once all the sites inside the polygon are at $+$, they have fixated at $+$. We will call such a polygon ``good droplet''. We now give the necessary notation. Since $\mathcal{U}$ is not supercritical, there is no open semicircle of unstable directions, which implies that there exists $N=3$ or 4 stable directions $u_1,...,u_N$ such that the origin of $\mathds{R}^2$ is contained in the interior of the convex envelope of $u_1,...,u_N$. We assume that $N$ is minimal, and that $u_1,...,u_N$ are numbered counterclockwise. A \emph{droplet} will be a polygon with sides orthogonal to $u_1,...,u_N$: for any $a>0$, we set $D(a)=\bigcap_{i=1}^N \bar{\mathds{H}}_{-u_i}(a)$ (see Figure \ref{fig_droplets}). We stress that these droplets are different from those that are generally used in the study of bootstrap percolation, which are of the form $D'(a)=\bigcap_{i=1}^N \mathds{H}_{u_i}(a)$ (see also Figure \ref{fig_droplets}). $D'(a)$ satisfies that if the only initial infected sites are inside $D'(a)$, there will never be infected sites outside $D'(a)$, as the first such site $x$ that would be infected would be outside an $\mathds{H}_{u_i}(a)$ while the update rule $x+X$ that infects it would be inside $\mathds{H}_{u_i}(a)$, which is impossible since $u_i$ is a stable direction. Thus the sites inside $D'(a)$ cannot influence the sites outside. To define $D(a)$, we replaced the $u_i$ by $-u_i$ because we want the opposite: prevent the sites outside the droplet to influence sites inside the droplet. However, sites ``at the corners'' of $D(a)$ will be surrounded mostly by sites in $D(a)^c$, so they can be influenced by sites in $D(a)^c$. This is why we will require the sites ``at the corners'' to be frozen.

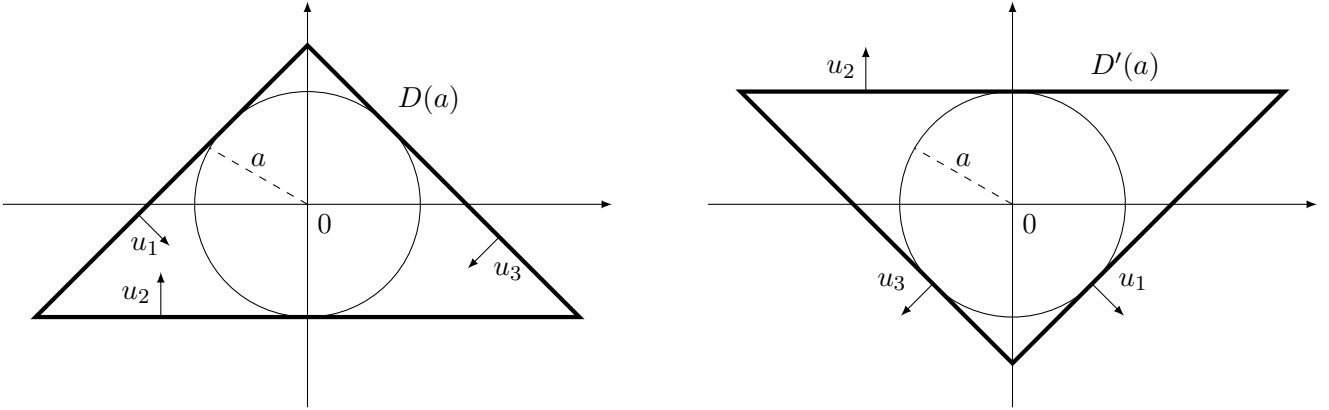
\begin{figure}
\begin{tikzpicture}[scale=1.5]
 \draw[->,>=latex] (-2.7,0)--(2.7,0) ;
 \draw[->,>=latex] (0,-1.8)--(0,1.8) ;
 \draw (0,0) node [below right] {0} ;
 \draw[ultra thick] (2.41,-1)--(-2.41,-1)--(0,1.41)--cycle ;
 \draw (0.71,0.71) node [above right] {$D(a)$} ;
 \draw (0,0) circle (1) ;
 \draw[dashed] (0,0)--(-0.87,0.5) node[midway,above] {$a$};
 \draw [->,>=latex] (-1.5,-0.09)--(-1.22,-0.37) node[left] {$u_1$} ;
 \draw [->,>=latex] (-1.3,-1)--(-1.3,-0.6) node[midway,left] {$u_2$} ;
 \draw [->,>=latex] (1.7,-0.29)--(1.42,-0.57) node[midway, below right] {$u_3$} ;
\end{tikzpicture}
\hspace{1cm}
\begin{tikzpicture}[scale=1.5]
 \draw[->,>=latex] (-2.7,0)--(2.7,0) ;
 \draw[->,>=latex] (0,-1.8)--(0,1.8) ;
 \draw (0,0) node [below right] {0} ;
 \draw[ultra thick] (2.41,1)--(-2.41,1)--(0,-1.41)--cycle ;
 \draw (1,1) node [above] {$D'(a)$} ;
 \draw (0,0) circle (1) ;
 \draw[dashed] (0,0)--(-0.87,0.5) node[midway,above] {$a$};
 \draw [->,>=latex] (0.71,-0.71)--(0.99,-0.99) node[midway, above right] {$u_1$} ;
 \draw [->,>=latex] (-1.3,1)--(-1.3,1.4) node[midway,left] {$u_2$} ;
 \draw [->,>=latex] (-0.71,-0.71)--(-0.99,-0.99) node[midway, above left] {$u_3$} ;
\end{tikzpicture}

\caption{Droplets $D(a)$ and $D'(a)$ in the case $N=3$.}\label{fig_droplets}
\end{figure}

In order to specify which sites need to be frozen exactly, we need more notation. We identify $\{1,...,N\}$ with $\mathds{Z}/N\mathds{Z}$, so $u_{N+1}=u_1$. For any $i \in \{1,...,N\}$, we set $C_{i,i+1}(a) = (\bar{\mathds{H}}_{-u_i}(a)\setminus\mathds{H}_{-u_i}(a-r)) \cap (\bar{\mathds{H}}_{-u_{i+1}}(a)\setminus\mathds{H}_{-u_{i+1}}(a-r))$ (see Figure \ref{fig_corners}) an ``enlarged corner of $D(a)$ between the side of the polygon orthogonal to $u_i$ and the side orthogonal to $u_{i+1}$''. For any $x\in\mathds{Z}^2$, we say $x+D(a)$ is good when for all $i \in \{1,...,N\}$, all the sites of $x+C_{i,i+1}(a)$ are frozen at $+$. The following result is one of the most important ingredients for our proofs.

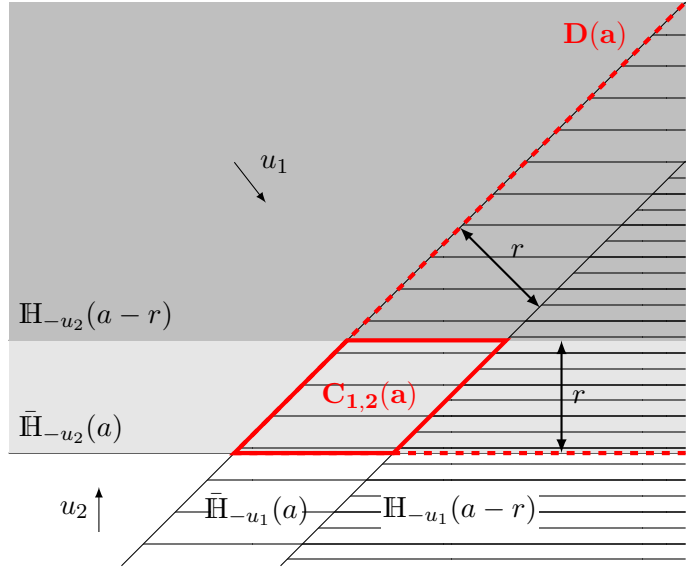
\begin{figure}
\begin{tikzpicture}[scale=1.5]
 \draw (-2,0)--(4,0) ;
 \fill[color=gray!20] (-2,0)--(4,0)--(4,4)--(-2,4)--cycle ;
 \draw (-2,0) [above right] node {$\bar{\mathds{H}}_{-u_2}(a)$};
 \draw (-2,1)--(4,1) ;
 \fill[color=gray!50] (-2,1)--(4,1)--(4,4)--(-2,4)--cycle ;
  \draw (-2,1) [above right] node {$\mathds{H}_{-u_2}(a-r)$};
 \draw (-1,-1)--(4,4) ;
 \fill[pattern={Lines[angle=90,distance={12pt}]}] (-1,-1)--(4,4)--(4,4)--(4,-1)--cycle ;
 \fill[white] (-0.2,-0.7)--(0.6,-0.7)--(0.6,-0.3)--(-0.2,-0.3) ;
 \draw (0.2,-0.5) node {$\bar{\mathds{H}}_{-u_1}(a)$};
 \draw (0.41,-1)--(4,2.59) ;
 \fill[pattern={Lines[angle=90,distance={6pt}]}] (0.41,-1)--(4,2.59)--(4,-1)--cycle ;
 \fill[white] (1.3,-0.7)--(2.7,-0.7)--(2.7,-0.3)--(1.3,-0.3) ;
 \draw (2,-0.5) node {$\mathds{H}_{-u_1}(a-r)$};
 \draw [ultra thick,red,dashed] (4,0)--(0,0)--(4,4) ;
 \draw (3.2,3.7) node{$\color{red}\boldsymbol{D(a)}$} ;
 \draw [ultra thick,red] (0,0)--(1.41,0)--(2.41,1)--(1,1)--cycle ; 
 \draw (1.2,0.5) node{$\color{red}\boldsymbol{C_{1,2}(a)}$} ;
 \draw[->,>=latex] (-1.2,-0.7)--(-1.2,-0.3) node [midway,left] {$u_2$};
 \draw[->,>=latex] (0,2.58)--(0.28,2.22) node [midway,above right] {$u_1$};
 \draw[thick,<->,>=latex] (2,2)--(2.71,1.29) node[midway, above right] {$r$} ;
 \draw[thick,<->,>=latex] (2.9,0)--(2.9,1) node[midway, right] {$r$} ;
\end{tikzpicture}
\caption{A representation of $C_{1,2}(a) = (\bar{\mathds{H}}_{-u_1}(a)\setminus\mathds{H}_{-u_1}(a-r)) \cap (\bar{\mathds{H}}_{-u_{2}}(a)\setminus\mathds{H}_{-u_{2}}(a-r))$, here the ``bottom left corner of $D(a)$''. $C_{1,2}(a)$ is the shape with the solid thick outline, and $D(a)$ the shape with the dashed thick outline. $\bar{\mathds{H}}_{-u_1}(a)$ and $\mathds{H}_{-u_1}(a-r)$ are represented with hatchings: $\mathds{H}_{-u_1}(a-r)$ is the region with dense hatchings, and $\bar{\mathds{H}}_{-u_1}(a)$ contains both this region and the region with the spaced hatchings. Similarly, $\mathds{H}_{-u_2}(a-r)$ is the region in dark gray and $\bar{\mathds{H}}_{-u_2}(a)$ contains both this region and the region in lighter gray.}
\label{fig_corners}
\end{figure}

\begin{proposition}\label{prop_key}
There exists $a_0=a_0(\mathcal{U},u_1,..,u_N)<+\infty$ so that for any $a \geq a_0$, if $D(a)$ is good, no site of $D(a)$ is frozen at $-$, and at some time $t \geq 0$ all sites in $D(a)$ are in state $+$, then all sites in $D(a)$ are well fixed at $+$ at time~$t$.
\end{proposition}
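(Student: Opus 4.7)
The plan is to show that under the hypotheses of the proposition, once every site of $D(a)$ is at $+$ at time $t$, no site of $D(a)$ can ever switch back to $-$. I argue by contradiction: suppose some site of $D(a)$ does switch after $t$, and let $s>t$ be the infimum of such switch times, realized by some $x_0\in D(a)$; then at time $s^-$ every site of $D(a)$ is still at $+$ by minimality. The sites of $\bigcup_i C_{i,i+1}(a)$ are frozen at $+$ and hence never switch, and by hypothesis no site of $D(a)$ is frozen at $-$, so $x_0$ is unfrozen and lies in $D(a)\setminus\bigcup_{i=1}^m C_{i,i+1}(a)$. In the $\mathcal{U}$-voter dynamics, a switch of $x_0$ requires the rule $X\in\mathcal{U}$ selected at the ring to satisfy that every site of $x_0+X$ is at $-$ at time $s^-$. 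The whole proof therefore reduces to the following geometric statement: for every non-corner $x_0\in D(a)\setminus\bigcup_i C_{i,i+1}(a)$ and every $X\in\mathcal{U}$, there exists $y\in X$ with $x_0+y\in D(a)$; such a point is then at $+$ at time $s^-$, contradicting the switching condition.

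\textbf{Choice of $a_0$ and classification of non-corner sites.} I would pick $a_0$ large enough that for all $a\geq a_0$ the strips $S_i(a):=\bar{\mathds{H}}_{-u_i}(a)\setminus\mathds{H}_{-u_i}(a-r)$ corresponding to two non-consecutive indices $i,j$ satisfy $S_i(a)\cap S_j(a)\cap D(a)=\emptyset$. Such an $a_0$ exists: when $m=3$ every pair of indices is consecutive, and when $m=4$ the scaling relation $D(a)=aD(1)$ forces the interior distance between non-adjacent sides of $D(a)$ to grow linearly in $a$, while $r$ is fixed; the minimality of $m$ and the fact that $0$ lies in the interior of $\operatorname{conv}(u_1,\dots,u_m)$ guarantee that $D(1)$ is a non-degenerate polygon. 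With this choice, any $x_0\in D(a)$ that lies simultaneously in two strips $S_i(a),S_j(a)$ has $i,j$ consecutive and therefore lies in $C_{i,i+1}(a)$. Hence a non-corner point $x_0\in D(a)\setminus\bigcup_i C_{i,i+1}(a)$ lies in at most one strip, splitting into two cases: either $\langle x_0,-u_j\rangle<a-r$ for every $j$ (``deep interior''), or there is a unique $i$ with $\langle x_0,-u_i\rangle\in[a-r,a]$ and $\langle x_0,-u_j\rangle<a-r$ for all $j\neq i$ (``side case'').

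\textbf{The geometric step and expected main obstacle.} Fix such $x_0$ and any $X\in\mathcal{U}$. In the deep-interior case, every $y\in X$ has $\|y\|_2\leq r$, so $\langle x_0+y,-u_j\rangle\leq\langle x_0,-u_j\rangle+r<a$ for every $j$, giving $x_0+y\in D(a)$. In the side case, the stability of $u_i$ yields some $y\in X$ with $\langle y,u_i\rangle\geq 0$, equivalently $\langle y,-u_i\rangle\leq 0$, whence $\langle x_0+y,-u_i\rangle\leq\langle x_0,-u_i\rangle\leq a$; and for $j\neq i$, combining $\langle x_0,-u_j\rangle<a-r$ with $\langle y,-u_j\rangle\leq r$ again gives $\langle x_0+y,-u_j\rangle<a$. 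In both cases $x_0+y\in D(a)$, producing the required contradiction and showing that all sites of $D(a)$ are well fixed at $+$ at time $t$. The step I anticipate as the main obstacle is the choice of $a_0$ in the case $m=4$: one must verify carefully that non-adjacent strips truly separate inside $D(a)$ for large $a$, which relies on the minimality of $m$ and on the position of the stable directions; the rest of the argument is then a clean case-by-case check built entirely on the definition of stability and on the elementary inequalities above.
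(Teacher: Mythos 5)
Your proposal is correct and follows essentially the same route as the paper: argue by contradiction with the first site of $D(a)$ to flip, note it cannot lie in a frozen corner, choose $a_0$ so that non-consecutive strips $\bar{\mathds{H}}_{-u_i}(a)\setminus\mathds{H}_{-u_i}(a-r)$ are disjoint inside $D(a)$ (only needed for $m=4$), and then use stability of the unique relevant $u_i$ to exhibit a site of $x_0+X$ still inside $D(a)$. The only difference is presentational — you apply stability directly to produce a witness $y\in X$ with $x_0+y\in D(a)$, whereas the paper derives $x+X\subset x+\mathds{H}_{u_i}$ and contradicts stability — which is the same argument in contrapositive form.
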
 

\begin{proof}
We prove the statement by contradiction. If some site of $D(a)$ is not well fixed at $+$ at time $t$, there exists $x \in D(a)$ such that $x$ can be the first site in $D(a)$ to switch its state to $-$ after time $t$. Then by the definition of the dynamics, there exists $X \in \mathcal{U}$ so that all sites in $x+X$ are at $-$ just before the switch. We will prove this is geometrically impossible. Since $x$ is the first site in $D(a)$ to change state, $x+X \subset D(a)^c$. Moreover, all sites in $x+X$ are at distance at most $r$ from $x$, therefore $x$ belongs to some $\bar{\mathds{H}}_{-u_i}(a)\setminus\mathds{H}_{-u_i}(a-r)$. Furthermore, since $C_{i-1,i}(a)$ and $C_{i,i+1}(a)$ are frozen, $x$ does not belong to them, hence $x \in \mathds{H}_{-u_{i-1}}(a-r)$ and $x \in \mathds{H}_{-u_{i+1}}(a-r)$. If $N=3$, we then have  $x \in \mathds{H}_{-u_{j}}(a-r)$ for all $j \neq i$. 

We now show it in the case $N=4$, where we have to deal with $j=i+2$. Since $D(1)$ has four sides of positive length, there exists some $\varepsilon=\varepsilon(u_1,u_2,u_3,u_4) >0$ so that $\bar{\mathds{H}}_{-u_1}(1-\varepsilon) \cap \bar{\mathds{H}}_{-u_3}(1-\varepsilon) \cap \bar{\mathds{H}}_{-u_2}(1) \cap \bar{\mathds{H}}_{-u_4}(1)$ has still four sides of positive length. Then $D(1)\cap(\bar{\mathds{H}}_{-u_1}(1)\setminus\mathds{H}_{-u_1}(1-\varepsilon))$ and  $D(1)\cap(\bar{\mathds{H}}_{-u_3}(1)\setminus\mathds{H}_{-u_3}(1-\varepsilon))$ are disjoint, thus $D(a)\cap(\bar{\mathds{H}}_{-u_1}(a)\setminus\mathds{H}_{-u_1}(a-a\varepsilon))$ and  $D(a)\cap(\bar{\mathds{H}}_{-u_3}(a)\setminus\mathds{H}_{-u_3}(a-a\varepsilon))$ are disjoint. If $a \geq r/\varepsilon$, this implies $D(a)\cap(\bar{\mathds{H}}_{-u_1}(a)\setminus\mathds{H}_{-u_1}(a-r))$ and  $D(a)\cap(\bar{\mathds{H}}_{-u_3}(a)\setminus\mathds{H}_{-u_3}(a-r))$ are disjoint. Similarly, one can find some $\varepsilon'=\varepsilon'(u_1,u_2,u_3,u_4) >0$ so that if $a \geq r/\varepsilon'$, $D(a)\cap(\bar{\mathds{H}}_{-u_2}(a)\setminus\mathds{H}_{-u_2}(a-r))$ and  $D(a)\cap(\bar{\mathds{H}}_{-u_4}(a)\setminus\mathds{H}_{-u_4}(a-r))$ are disjoint. We set $a_0=\max(r/\varepsilon,r/\varepsilon')$, then if $a \geq a_0$, since $x \in D(a) \cap (\bar{\mathds{H}}_{-u_i}(a)\setminus\mathds{H}_{-u_i}(a-r))$, we have $x \not\in D(a) \cap (\bar{\mathds{H}}_{-u_{i+2}}(a)\setminus\mathds{H}_{-u_{i+2}}(a-r))$, thus $x \in \mathds{H}_{-u_{i+2}}(a-r)$. Consequently, $x \in \mathds{H}_{-u_{j}}(a-r)$ for all $j \neq i$.

Since $x \in \mathds{H}_{-u_{j}}(a-r)$ for all $j \neq i$, we have $x+X \subset \mathds{H}_{-u_{j}}(a)$ for all $j \neq i$. But $x+X \subset D(a)^c$, hence $x+X \subset \bar{\mathds{H}}_{-u_{i}}(a)^c=\mathds{H}_{u_i}(-a)$. However, $x \in \bar{\mathds{H}}_{-u_i}(a)$, so $\langle x,u_i\rangle \geq -a$, hence $x+X \subset x+\mathds{H}_{u_i}$, which contradicts the fact that $u_i$ is a stable direction, and ends the proof.
\end{proof}

\begin{remark}\label{rem_supercritical}
 If $\mathcal{U}$ is supercritical, a similar construction is impossible. Indeed, it was proven in Section 5.3 of \cite{Bollobas_et_al2015} that for supercritical update families, one can find a finite set of sites such that the bootstrap percolation dynamics starting with only these sites initially infected will propagate the infection at an unlimited distance along a given direction. In the $\mathcal{U}$-voter dynamics, if such a set of sites is at $-$ and is positioned so that this direction points to a side of $D(a)$, even if these sites are far away from $D(a)$, the $-$ may propagate from these sites and enter $D(a)$. Indeed, we can denote $x_1,x_2,...$ the sites that are infected by this bootstrap percolation dynamics in the order they are infected, excluding those that are already at $-$, and $X_1,X_2,...$ the respective update rules allowing to infect them. If there are successive clock rings at $x_1,x_2,...$ and the update rules chosen are $X_1,X_2,...$, these sites will switch to $-$. 
\end{remark}

 We also prove a technical lemma on the $C_{i,i+1}(a)$ that will be needed later.

 \begin{lemma}\label{lem_technical}
 There exists $\tilde a_0=\tilde a_0(\mathcal{U},u_1,..,u_N)<+\infty$ so that for any $a \geq \tilde a_0$, for any $i\in\{1,...,N\}$, $C_{i,i+1}(a) \subset D(a)$. 
\end{lemma}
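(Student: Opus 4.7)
The plan is to exploit the fact that $C_{i,i+1}(a)$ sits in a parallelogram of fixed diameter (depending only on $r$ and the angles between the $u_j$'s) near a single specific corner of $D(a)$, while this corner lies strictly inside each of the other half-planes $\bar{\mathds{H}}_{-u_j}(a)$, with a gap that grows linearly in $a$.

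First I would introduce, for each $i \in \{1,\ldots,m\}$, the corner $p_i(a) \in \mathds{R}^2$ defined by $\langle p_i(a), -u_i \rangle = \langle p_i(a), -u_{i+1}\rangle = a$. Since $u_i$ and $u_{i+1}$ are consecutive in the counterclockwise ordering, they are linearly independent, so $p_i(a)$ is well-defined, and by linearity $p_i(a) = a\cdot p_i(1)$. From the definition of $C_{i,i+1}(a)$, any $x \in C_{i,i+1}(a)$ satisfies $a-r \leq \langle x, -u_i\rangle \leq a$ and $a-r \leq \langle x, -u_{i+1}\rangle \leq a$, so $x$ lies in the parallelogram with vertex $p_i(a)$ and two sides of length $r/\sin\theta_i$, where $\theta_i$ is the angle between $u_i$ and $u_{i+1}$. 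In particular, $\|x - p_i(a)\|_2 \leq C_0$ for a constant $C_0=C_0(\mathcal{U},u_1,\ldots,u_m)$ independent of $a$.

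Next, by the definition of $C_{i,i+1}(a)$, the conditions $\langle x, -u_i\rangle \leq a$ and $\langle x, -u_{i+1}\rangle \leq a$ are automatic, so to show $x \in D(a)$ it only remains to verify $\langle x, -u_j\rangle \leq a$ for each $j \notin \{i,i+1\}$. By minimality of $m$ (so each $u_j$ contributes an essential face of $D(1)$), the vertex $p_i(1)$ cannot lie on the line $\{y : \langle y, -u_j \rangle = 1\}$: otherwise three lines would be concurrent and removing $u_j$ would leave $D(1)$ unchanged. Hence $\langle p_i(1), -u_j\rangle < 1$ for all valid $i,j$, and we can set
\[
\delta := \min_{i \in \{1,\ldots,m\}}\; \min_{j \notin \{i,i+1\}} \bigl(1 - \langle p_i(1), -u_j\rangle\bigr) > 0,
\]
a quantity depending only on $\mathcal{U}$ and $u_1,\ldots,u_m$.

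Finally, for $x \in C_{i,i+1}(a)$ and $j \notin \{i,i+1\}$, the Cauchy--Schwarz inequality combined with $\|u_j\|_2 = 1$ gives
\[
\langle x, -u_j\rangle \;\leq\; \langle p_i(a), -u_j\rangle + \|x - p_i(a)\|_2 \;\leq\; a(1-\delta) + C_0,
\]
which is bounded above by $a$ as soon as $a \geq C_0/\delta$. Setting $\tilde a_0 := C_0/\delta$ therefore ensures $C_{i,i+1}(a) \subset D(a)$ for every $a \geq \tilde a_0$, as required. The only delicate point is the geometric step justifying $\delta > 0$, which rests entirely on the minimality of $m$; once this is in hand, the rest is a straightforward scaling argument.
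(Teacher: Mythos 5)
Your argument is correct and essentially the same as the paper's: both bound $C_{i,i+1}(a)$ by its $a$-independent diameter around the corner $a\,c_{i,i+1}$ (your $p_i(a)$), use minimality of $m$ to obtain the strict inequality $\langle c_{i,i+1},-u_j\rangle<1$ for $j\neq i,i+1$, and conclude because the resulting gap grows linearly in $a$ while the diameter stays bounded. Nothing further is needed.
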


\begin{proof}
 If $N=3$, $D(a)$ has 3 sides, and if $N=4$ then $D(a)$ has four sides, since if $D(a)$ had less than four sides, the directions orthogonal to these sides would contain 0 in the interior of their convex envelope, which would contradict the minimality of $N$. We notice that if we set $i\in\{1,...,N\}$, then for all $a>0$ the $C_{i,i+1}(a)$ are translations of each other. We denote $d_i = \max\{\|y-y'\|_2\,|\,y,y'\in C_{i,i+1}(1)\}$ the diameter of the $C_{i,i+1}(a)$. For each $i\in\{1,...,N\}$, we denote by $c_{i,i+1}$ the site that is the ``corner of $D(1)$ between the side of the polygon orthogonal to $u_i$ and the side orthogonal to $u_{i+1}$'', defined by $\langle c_{i,i+1},-u_i \rangle=1$ and $\langle c_{i,i+1},-u_{i+1} \rangle=1$. Since $D(1)$ has $N$ sides, for each $j \neq i,i+1$, $\langle c_{i,i+1},-u_j \rangle < 1$. If we choose $\tilde a_0 \geq \max\{d_i,\frac{d_i}{1-\langle c_{i,i+1},-u_j \rangle} \,|\,i,j \in\{1,...,N\},j \neq i,i+1\}$, then for $a \geq \tilde a_0$, $i\in\{1,...,N\}$ and $j \neq i,i+1$, the points in $C_{i,i+1}(a)$ are at distance at most $d_i$ of $ac_{i,i+1}$, hence are in $\bar{\mathds{H}}_{-u_j}(a\langle c_{i,i+1},-u_j \rangle+d_i) \subset \bar{\mathds{H}}_{-u_j}(a)$. We deduce $C_{i,i+1}(a) \subset D(a)$. 
\end{proof}

\section{Easily fixating regions: good blocks}\label{sec_good_blocks}

The aim of this section is to describe regions of $\mathds{Z}^2$, called \emph{good blocks} (Definition \ref{def_good_block}), favorable enough so that we can prove they will likely fixate at $+$, and to prove that the probability a region is favorable is very high (Proposition \ref{prop_good_blocks}). From now on, we consider both non-supercritical update families and supercritical update families with no disjoint update rules. For any $L \in \mathds{N}^*$, we denote $B_L=\{-L,...,L\}^2$. A \emph{block} is a set of the form $x+B_L$ where $x \in \mathds{Z}^2$. The definition of a good block will differ according to whether $\mathcal{U}$ is supercritical or not. If $\mathcal{U}$ is not supercritical, we set $M=\max\{\|x\|_2 \,|\, x \in D(1)\}$ and $M'=\max\{\|x\|_2 \,|\, x \in D'(1)\}$, where $D(1)$ and $D'(1)$ were defined at the beginning of Section \ref{sec_good_droplet}. For reasons that will be apparent later, we set $K=25$ if $\mathcal{U}$ is supercritical and $K=(2\lceil(4M+1)M'\rceil+1)^2$ if $\mathcal{U}$ is not supercritical.

\begin{definition}\label{def_good_block}
 Let $L \in \mathds{N}^*$. For any $x \in \mathds{Z}^2$, one says the block $x+B_L$ is \emph{good} when one of the following conditions is satisfied.
 \begin{itemize}
  \item If $\mathcal{U}$ is supercritical and contains no disjoint update rules, all sites in $x+B_L$ are infectable by the bootstrap percolation in $x+B_{2L}$ (with healthy boundary conditions) and initial infected sites the sites frozen at $+$ in $x+B_{2L}$, and there is no site frozen at $-$ in $x+B_{2L}$.
  \item If $\mathcal{U}$ is not supercritical, all sites in $x+D(3L)$ are infectable by the bootstrap percolation with initial infected sites the sites frozen at $+$ in $x+D(4L)$, there is no site frozen at $-$ in $x+D'(4LM+1)$, and there exists $2L \leq a \leq 3L$ such that $x+D(a)$ is good.
 \end{itemize}
\end{definition}

We remind the reader that the constants $a_0$, $\tilde a_0$ were defined in Proposition \ref{prop_key} and Lemma \ref{lem_technical}. In the following, when we say a quantity depends on $u_1,...,u_N$, this dependence will only apply for non-supercritical $\mathcal{U}$.

 \begin{proposition}\label{prop_good_blocks}
  There exists $\rho_0^-=\rho_0^-(\mathcal{U},u_1,...,u_N,\rho^+)>0$ and $L_0=L_0(\mathcal{U},u_1,...,u_N,\rho^+) \geq r$, with $L_0 \geq \max(a_0,\tilde a_0)$ if $\mathcal{U}$ is not supercritical, such that for any $0 \leq \rho^- \leq \rho_0^-$ and for any $x \in \mathds{Z}^2$, we have $\mathds{P}(x+B_{L_0}$ is not good$)\leq 1/2^{5K}$.
 \end{proposition}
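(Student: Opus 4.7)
The plan is to fix $x=0$ by translation invariance and decompose the failure event ``$B_{L_0}$ is not good'' into at most three pieces according to Definition \ref{def_good_block}: (a) the presence of a site frozen at $-$ in the relevant region --- namely $B_{2L_0}$ in the supercritical case, or $D'(4L_0M+1)$ in the non-supercritical case; (b) failure of bootstrap percolation to infect every site of $B_{L_0}$ (resp.\ $D(3L_0)$) starting from the sites frozen at $+$ in $B_{2L_0}$ (resp.\ $D(4L_0)$); and (c) in the non-supercritical case only, the non-existence of any $a \in [2L_0,3L_0]$ with $D(a)$ good. The order of quantifiers is crucial: first I pick $L_0$, depending only on $\mathcal{U}$, $u_1,\ldots,u_m$ and $\rho^+$, large enough that (b) and (c) each have probability at most $1/(3\cdot 2^{5K})$ and that $L_0 \geq r$ and, if $\mathcal{U}$ is not supercritical, $L_0 \geq \max(a_0,\tilde a_0)$; then I pick $\rho_0^-$ small enough, depending on this $L_0$, so that (a) also has probability at most $1/(3\cdot 2^{5K})$.

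For (c), each corner set $C_{i,i+1}(a)$ is the intersection of two slabs of width $r$, hence a parallelogram whose cardinality is bounded by some constant $S=S(\mathcal{U},u_1,\ldots,u_m)$ independent of $a$, and therefore $\mathds{P}(D(a)\text{ is good}) \geq (\rho^+)^{mS} =: p_0 > 0$ for every $a$. Since the corner vertices of $D(a)$ move linearly in $a$ while the diameters of the $C_{i,i+1}(a)$ remain bounded (by Lemma \ref{lem_technical} and its proof), I can find a constant $\Delta=\Delta(\mathcal{U},u_1,\ldots,u_m)$ and an arithmetic progression $a_1 < \ldots < a_n$ of step $\Delta$ inside $[2L_0,3L_0]$ with $n \geq \lfloor L_0/\Delta \rfloor$ such that the total corner sets $\bigcup_i C_{i,i+1}(a_k)$, $k=1,\ldots,n$, are pairwise disjoint. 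The events $\{D(a_k)\text{ is good}\}$ are then independent, and (c) is bounded by $(1-p_0)^{\lfloor L_0/\Delta\rfloor}$, which is at most $1/(3\cdot 2^{5K})$ for $L_0$ large enough. For (b), I invoke the assumption $\rho^+ > \tilde q_c(\mathcal{U})$: in the supercritical and critical cases $\tilde q_c(\mathcal{U})=0$ and standard finite-volume bootstrap percolation estimates from \cite{Balister_et_al2016,Bollobas_et_al2015,Bollobas_et_al2017} apply, while in the subcritical case the relevant estimate is exactly the one built into the definition of $\tilde q_c$ in \cite{Hartarsky2018}. In all three cases the probability that some site of a region of linear size $O(L_0)$ fails to be infected tends to $0$ as $L_0 \to \infty$, so (b) can be made $\leq 1/(3\cdot 2^{5K})$ for $L_0$ large enough.

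Once $L_0$ has been fixed, the region involved in (a) has cardinality $C(L_0)=O(L_0^2)$, so a crude union bound gives $\mathds{P}(\text{(a) occurs}) \leq C(L_0)\rho^-$, and setting $\rho_0^- = 1/(3 C(L_0)\, 2^{5K})$ finishes the proof. I expect the main obstacle to be (b): the bootstrap percolation literature usually states its results as bounds on the probability that a single site (typically the origin) is infected, whereas here I need the stronger event that \emph{every} site of a macroscopic region is infectable simultaneously, and in the subcritical case the relevant quantitative estimate must be extracted from the definition of $\tilde q_c$ in \cite{Hartarsky2018} in a form compatible with the two-scale geometry ($L_0$ versus $2L_0$, or $3L_0$ versus $4L_0$) built into Definition \ref{def_good_block}.
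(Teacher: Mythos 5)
Your overall architecture is exactly the paper's: the same decomposition into the three failure events, the same order of quantifiers ($L_0$ chosen first so that (b) and (c) are each at most $1/(3\cdot 2^{5K})$, then $\rho_0^-$ of order $1/(|\text{region}|\cdot 2^{5K})$ to kill (a)), and for (c) the same device of finding $\lfloor L_0/\Delta\rfloor$ values of $a$ whose corner sets lie in pairwise disjoint annuli (the paper takes $\Delta=r+1$ and uses Lemma \ref{lem_technical} to place $C_{i,i+1}(a)$ inside $D(a)\setminus D(a-(r+1))$), giving independence and the bound $(1-p_0)^{\lfloor L_0/\Delta\rfloor}$. For (b) in the non-supercritical case your sketch also matches the paper: Theorem 3.5 of \cite{Hartarsky2018} gives $\mathds{P}(0$ is not infectable from the frozen-$+$ sites of $B_n) \leq e^{-cn}$ whenever $\rho^+>\tilde q_c(\mathcal{U})$, and since $y+B_{\lfloor L/\sqrt{2}\rfloor}\subset x+D(4L)$ for every $y\in x+D(3L)$, monotonicity plus a union bound over the $O(L^2)$ sites beats the exponential. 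So the ``single site versus whole region'' worry you raise is resolved by nothing more than this union bound.

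The genuine gap is (b) in the supercritical case, precisely where you write that ``standard finite-volume bootstrap percolation estimates apply.'' There Definition \ref{def_good_block} demands infectability by the bootstrap percolation \emph{restricted to $x+B_{2L}$ with healthy boundary conditions}, and the single-site estimate of \cite{Hartarsky2018} (or any unrestricted-dynamics estimate) does not transfer: for a supercritical family the infection started from the frozen-$+$ sites near $y$ could in principle need to leave $x+B_{2L}$ before returning to infect $y$, and there is no analogue of the confining droplet $D'(4LM+1)$ available in that case. The paper's fix is the explicit construction of Section 5.3 of \cite{Bollobas_et_al2015}: taking $u$ the center of an open semicircle of unstable directions, there are rectangles $R(a,b)$ oriented along $u$ such that a fully infected copy of $R(a,b)$ infects the prolonged rectangle $R(a,b')$ while never leaving $R(a,b'+c)$; one then considers the $\lfloor L/(2b)\rfloor$ disjoint translates $y-nbu+R(a,b)$, each entirely frozen at $+$ with probability $(\rho^+)^k$, and each of which, if entirely frozen, infects $y$ without the infection ever exiting $x+B_{2L}$. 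This yields the exponential decay in $L$ for the \emph{restricted} dynamics that the union bound then exploits. Your proposal correctly identifies this as the main obstacle but does not supply the argument, so as written the supercritical half of the proposition is not established.
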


 \begin{proof}
 We will deal differently with the non-supercritical case and with the supercritical case, though we need bootstrap percolation technology in both cases.
 
 \emph{Case $\mathcal{U}$ supercritical with no disjoint update rules.}
 
 To deal with this case, we will use the tools developed in \cite{Bollobas_et_al2015} for supercritical bootstrap percolation. Since $\mathcal{U}$ is supercritical, there exists an open semicircle of unstable directions. We denote its center by $u\in S^1$. We consider rectangles oriented in direction $u$: denoting $u^\perp \in S^1$ a direction orthogonal to $u$, for any $a,b>0$, the rectangle oriented in direction $u$ with width $a$ and length $b$ will be $R(a,b)=\{x \in \mathds{R^2} \,|\, -a/2 \leq \langle x,u^\perp \rangle \leq a/2, 0 < \langle x,u \rangle \leq b\}$. It was proven in Section 5.3 of \cite{Bollobas_et_al2015} that there exists some $a>0,b>0,c<+\infty$ depending only on $\mathcal{U}$ so that for any $x \in \mathds{Z}^2$, if $x+R(a,b)$ is initially infected, for any $b'>b$ the rectangle $x+R(a,b')$ is infectable by the bootstrap percolation in $x+R(a,b'+c)$ (the latter part is not explicitly stated in \cite{Bollobas_et_al2015}, but can be seen in the proofs). Consequently, if $L$ is large enough depending on $a,b,c$, for any $x \in \mathds{Z}^2$, $y \in x+B_L$, if there exists $1 \leq n \leq L/(2b)$ so that $y-nbu+R(a,b)$ is initially infected, then $y$ is infectable by the bootstrap percolation in $x+B_{2L}$ (with healthy boundary conditions). Therefore, if we denote $\mathcal{B}_y=\{y$ is not infectable by the bootstrap percolation in $x+B_{2L}$ with initial infected sites the sites frozen at $+$ in $x+B_{2L}\}$ and $k$ the maximum number of sites in a translation of $R(a,b)$, which is bounded, then $\mathds{P}(\mathcal{B}_y) \leq (1-(\rho^+)^k)^{\lfloor L/(2b) \rfloor}$. Thus $\mathds{P}(\cup_{y \in x+B_L}\mathcal{B}_y) \leq (2L+1)^2(1-(\rho^+)^k)^{\lfloor L/(2b) \rfloor}$. We choose $L_0 = L_0(\mathcal{U},u_1,...,u_N,\rho^+) \geq r$ so that $(2L_0+1)^2(1-(\rho^+)^k)^{\lfloor L_0/(2b) \rfloor}\leq 1/2^{5K+1}$, hence $\mathds{P}(\cup_{y \in x+B_{L_0}}\mathcal{B}_y) \leq 1/2^{5K+1}$. If we now set $\rho_0^- = 1/(2^{5K+1}(4L_0+1)^2)$, then for any $0 \leq \rho^- \leq \rho_0^-$, we have $\mathds{P}(x+B_{L_0}$ is not good$)\leq 1/2^{5K}$.

 \emph{Case $\mathcal{U}$ non-supercritical.} 
 
 Let $x\in\mathds{Z}^2$. We first study the probability of finding a good droplet $x+D(a)$. We assume $L \geq \tilde a_0$. Lemma \ref{lem_technical} yields that for $n \in \{1,...,\lfloor L/(r+1)\rfloor\}$, the $x+C_{i,i+1}(2L+n(r+1))$ are contained in $x+D(2L+n(r+1))$, and by their definition they are contained in $x+D(2L+(n-1)(r+1))^c$. This implies the events $\{x+D(2L+n(r+1))$ is good$\}$ for $n \in \{0,...,\lfloor L/(r+1)\rfloor\}$ depend on disjoint sets of sites hence are independent. Furthermore, we notice that if we set $i\in\{1,...,N\}$, for all $a>0$ the $C_{i,i+1}(a)$ are translations of each other. We denote $k_i$ the maximum number of sites in any translation of $C_{i,i+1}(1)$, which is finite, and $k=\sum_{i=1}^N k_i$. Then $\mathds{P}(\bigcap_{n=1}^{\lfloor L/(r+1)\rfloor}\{ x+D(2L+n(r+1))$ is not good$\}) \leq (1-(\rho^+)^k)^{\lfloor L/(r+1)\rfloor}$. If we choose $L_1=L_1(\mathcal{U},u_1,...,u_N,\rho^+) \geq \tilde a_0$ so that $(1-(\rho^+)^k)^{\lfloor L_1/(r+1)\rfloor} \leq 1/(3 \cdot 2^{5K})$, for all $L \geq L_1$ we have $\mathds{P}(\{\exists\, 2L \leq a \leq 3L, x+D(a)$ is good$\}^c) \leq 1/(3 \cdot 2^{5K})$. 
 
 We now consider the probability that sites are infectable. To deal with it, we first notice that $\rho^+ > \tilde q_c(\mathcal{U})$ (indeed, Theorem 3.1 of \cite{Hartarsky2018} states $\tilde q_c(\mathcal{U})=0$ for critical update families). This allows to use Theorem 3.5 of \cite{Hartarsky2018}, which states that since $\rho^+ > \tilde q_c(\mathcal{U})$, there exists a constant $c=c(\mathcal{U},\rho^+)$ so that for any $n \in \mathds{N}$, we have $\mathds{P}(0$ is not infectable by the bootstrap percolation starting from sites frozen at $+$ in $B_n) \leq e^{-cn}$. Moreover, for $y \in x+D(3L)$, we have $y+B_{\lfloor L/\sqrt{2}\rfloor} \subset x+D(4L)$, hence $\mathds{P}(y$ is not infectable starting from sites frozen at $+$ in $x+D(4L)) \leq \mathds{P}(y$ is not infectable starting from sites frozen at $+$ in $y+B_{\lfloor L/\sqrt{2}\rfloor}) \leq e^{-c\lfloor L/\sqrt{2}\rfloor}$. In addition, remembering the definition of $M,M'$ at the beginning of the section, there are at most $(6ML+1)^2$ sites in $x+D(3L)$, hence $\mathds{P}(\bigcup_{y \in x+D(3L)}\{y$ is not infectable starting from sites frozen at $+$ in $x+D(4L)\})\leq (6ML+1)^2 e^{-c\lfloor L/\sqrt{2}\rfloor}$. We then set $L_0=L_0(\mathcal{U},u_1,...,u_N) \geq \max(L_1,a_0,1)$ so that $(6ML_0+1)^2 e^{-c\lfloor L_0/\sqrt{2}\rfloor} \leq 1/(3 \cdot 2^{5K})$. If we now set $\rho_0^- = \frac{1}{3 \cdot 2^{5K}(2(4L_0M+1)+1)^2}$, then for any $0 \leq \rho^- \leq \rho_0^-$, we have $\mathds{P}(x+B_{L_0}$ is not good$)\leq 1/2^{5K}$.
 \end{proof}

 \begin{remark}
  It is in the proof of Proposition \ref{prop_good_blocks} that we need $\rho^+ > \tilde q_c(\mathcal{U})$ instead of $\rho^+ > q_c(\mathcal{U})$ for subcritical dynamics.
 \end{remark}
 
 \section{Fixation at $+$: proof of Theorems \ref{thm_fixation} and \ref{thm_connected_components}}\label{sec_fixation}

 The goal of this section is to prove Theorem \ref{thm_fixation}, which states fixation at $+$ of all sites occurs if $\rho^-=0$, and Theorem \ref{thm_connected_components}, which claims the existence of a deterministic time at which the connected components of sites not fixated at $+$ are finite. In order to do that, we begin by showing Theorem \ref{thm_connected_components_blocks}, which roughly states that for any $0 \leq \rho^- \leq \rho_0^-$ (with $\rho_0^-$ the one defined in Proposition \ref{prop_good_blocks}), there exists a deterministic time so that the connected components of blocks containing sites not well fixed at this time are finite. This result is stronger than Theorem \ref{thm_connected_components}, and is central in the proof of Theorems \ref{thm_fixation} and \ref{thm_flippers}, since it allows to consider only the dynamics in these finite connected components. At the end of this section, we prove Theorem~\ref{thm_fixation}. To state Theorem \ref{thm_connected_components_blocks}, we consider the blocks $2L_0 x +B_{L_0}$ for all $x \in \mathds{Z}^2$ (with $L_0$ the one defined in Proposition \ref{prop_good_blocks}), denoting $B_{L_0}(x)=2L_0 x +B_{L_0}$. Blocks $B_{L_0}(x)$ and $B_{L_0}(y)$ will be considered neighbors when $\|x-y\|_\infty=1$, which gives a graph with an associated notion of connectedness. We then have the following.

 \begin{theorem}\label{thm_connected_components_blocks}
  There exists $T_0 = T_0(\mathcal{U},u_1,...,u_N,\rho^+) < +\infty$ deterministic so that for any $0 \leq \rho^- \leq \rho_0^-$, for any choice of initial distribution $\mu$, almost surely the connected components of blocks containing sites that are not well fixed at $+$ at time $T_0$ are finite.
 \end{theorem}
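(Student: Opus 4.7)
The plan is to renormalize on the scale $L_0$ and reduce to a dependent percolation argument. For each $x\in\mathds{Z}^2$ I will define an event $\mathcal{E}_x$ such that (i) $\mathcal{E}_x$ depends only on the initial configuration and Poisson clocks inside a bounded neighborhood of $B_{L_0}(x)$, (ii) on $\mathcal{E}_x$, every site of $B_{L_0}(x)$ is well fixed at $+$ at some deterministic time $T_0$, and (iii) $\mathds{P}(\mathcal{E}_x^c)\leq 2^{-5K}$. Because the range of dependence is $O(L_0)$, the family $\{\mathcal{E}_x^c\}_{x\in\mathds{Z}^2}$ is $j$-dependent for some constant $j=j(\mathcal{U},L_0)$; since the exponent $5K$ in Proposition \ref{prop_good_blocks} was chosen exactly so that $2^{-5K}$ is small compared to the Liggett--Schonmann--Stacey threshold, $\{\mathcal{E}_x^c\}_{x\in\mathds{Z}^2}$ is stochastically dominated by a subcritical Bernoulli site percolation on $\mathds{Z}^2$. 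The clusters of that percolation are almost surely finite, which by (ii) is the conclusion of the theorem.

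I take $\mathcal{E}_x = \{B_{L_0}(x)\text{ is good}\} \cap \mathcal{S}_x$, where $\mathcal{S}_x$ is an explicit dynamical event. In the supercritical case with no disjoint update rules, goodness provides, by its very definition, an order $y_1,\dots,y_N$ in which the sites of $B_{L_0}(x)$ get infected by the bootstrap percolation starting from the frozen $+$ sites in $2L_0x + B_{2L_0}$, together with witnessing update rules $X_1,\dots,X_N$. The event $\mathcal{S}_x$ is that, inside some unit sub-window of $[0,T_0]$, the clocks at $y_1,\dots,y_N$ are the first to ring in $2L_0x+B_{2L_0}$, in this order, each picking $X_j$. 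As sketched in Section \ref{sec_outline}, the no-disjoint-rules hypothesis then inductively well fixes each $y_j$ at $+$: once all sites of $y_j+X_j$ are at $+$ and well fixed (frozen or fixed at earlier stages of the induction), any update rule $X'$ that could later flip $y_j$ to $-$ would have to intersect $X_j$, so $y_j+X'$ contains a well-fixed $+$ and cannot be all at $-$. In the non-supercritical case, I use the analogous order on the sites of $2L_0x + D(3L_0)$ starting from the frozen $+$ sites in $2L_0x + D(4L_0)$, and ask the scenario to put all of $2L_0x + D(a)$ at $+$ for the good $a\in[2L_0,3L_0]$ supplied by goodness; Proposition \ref{prop_key} then well fixes the entire droplet $2L_0x + D(a)$, and the inclusions $B_{L_0}(x) = 2L_0x + B_{L_0} \subset 2L_0x + D(2L_0) \subset 2L_0x + D(a)$ (valid because $D(a)$ contains the Euclidean ball of radius $a\geq 2L_0\geq L_0\sqrt 2$) transfer the conclusion to the block.

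Property (i) is immediate from the definition of $\mathcal{E}_x$: it is determined by the frozen and initial states at sites, and by the clocks on $[0,T_0]$, in a window around $B_{L_0}(x)$ of radius at most $2L_0$ (supercritical case) or $4L_0M+1$ (non-supercritical case). Property (iii) is obtained by combining Proposition \ref{prop_good_blocks}, which already bounds $\mathds{P}(\text{good}^c)$, with a lower bound on $\mathds{P}(\mathcal{S}_x\mid\text{good})$: the sufficient subevent described above has probability at least some $c=c(\mathcal{U},L_0)>0$ on any given unit time window, and partitioning $[0,T_0]$ into disjoint unit windows and using Markovianity of the $\mathcal{U}$-voter dynamics yields a failure probability in $[0,T_0]$ that decays exponentially in $T_0$. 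Choosing $T_0$ large enough (depending only on $\mathcal{U},\rho^+,L_0$) makes this below $2^{-5K-1}$, and absorbing the remaining $2^{-5K-1}$ by a mild strengthening of the constants in Proposition \ref{prop_good_blocks} yields (iii). The main obstacle is verifying that $\mathcal{S}_x$ really forces well fixation of the block, that is, that no parasitic clock ring during the prescribed scenario can derail the induction; this is precisely where the no-disjoint-rules property is used in the supercritical case and where Proposition \ref{prop_key} is used in the non-supercritical case, and it is the place where the structural assumptions on $\mathcal{U}$ enter essentially.
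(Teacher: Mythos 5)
Your proposal is correct, and its substance coincides with the paper's proof: the per-block event (goodness together with a prescribed scenario of clock rings and rule choices, confined to a bounded window, that fills the block with $+$ within a unit time interval), the uniform lower bound on the conditional probability of that scenario, the choice of $T_0$ so that failure over $T_0$ disjoint unit windows has probability below $2^{-5K}$, and the two well-fixation mechanisms (the intersection argument for supercritical families with no disjoint rules, Proposition \ref{prop_key} for the others, which together constitute the paper's Claim \ref{claim_conn_blocks}) are exactly the paper's ingredients. The one place you diverge is the concluding percolation step. The paper does not invoke Liggett--Schonmann--Stacey; it runs an explicit path-counting argument, extracting from each path of $n$ bad blocks a subfamily of $\lfloor n/K\rfloor$ blocks whose dependency regions $B_i'$ are pairwise disjoint, conditioning on which of those are good, and exploiting the conditional independence of the scenario events on disjoint regions to get $8^n\cdot 2^n\cdot(2^{-5K})^{\lfloor n/K\rfloor}\to 0$ --- so the exponent $5K$ is tuned to beat this entropy after the loss of the factor $K$, not to meet an LSS threshold as you assert. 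Your route is a legitimate and arguably cleaner alternative: the field $\{\mathcal{E}_x^c\}$ is a finite-range factor of independent inputs, hence $j$-dependent with $j$ of order $\sqrt{K}$, and a quantitative LSS bound converts $2^{-5K}$ into a dominating Bernoulli parameter of order $2^{-5}$, below the $*$-connectivity Peierls threshold $1/8$; but you should actually state and verify the quantitative form of LSS you rely on rather than presuming the constants were chosen for it. Two small points to tighten: in the non-supercritical case the dependency radius is the circumradius $(4L_0M+1)M'$ of $D'(4L_0M+1)$, not $4L_0M+1$ (this is what produces the paper's value of $K$); and your scenario must confine the entire bootstrap infection order to the window, which is precisely what the inclusion of the infection in $2L_0x+D'(4L_0M+1)$ guarantees and is worth saying explicitly.
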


 \begin{proof}
 We first give the idea of the proof. We will lower bound the probability that the dynamics fills a good block with $+$ in a time interval of length 1, which allows to find $T_0$ so that for any good block, the probability that the dynamics fills the block with $+$ before time $T_0$ is very high. We also show that roughly, if the sites of a good block are at $+$ at some time, they are well fixed at $+$ at this time, which will be Claim \ref{claim_conn_blocks}.  Since by Proposition \ref{prop_good_blocks} blocks are likely to be good, the probability that a given block is full of sites well fixed at $+$ at time $T_0$ is then close to 1 (to shorten the notation, for any $T>0$, we say a block is \emph{$T$-fixed} when all its sites are well fixed at $+$ at time $T$). We then want to use the following classical percolation argument: if there is an infinite connected component of non $T_0$-fixed blocks, for any integer $n$ there is a path of length $n$ of such blocks, which has probability tending to 0 when $n$ tends to $+\infty$. However, one has to be careful in proving the latter part, since the events that the blocks are $T_0$-fixed are not independent.
 
 We now give the rigorous argument. We set $0 \leq \rho^- \leq \rho_0^-$ and $\mu$ an initial distribution. It is enough to find $T_0$ so that for any $x \in \mathds{Z}^2$, almost surely the connected component of non $T_0$-fixed blocks containing $B_{L_0}(x)$ is finite. Let $x \in \mathds{Z}^2$, $T>0$. If $B_{L_0}(x)$ is contained in an infinite connected component of non $T$-fixed blocks, for all $n>0$ there exists a \emph{non $T$-fixed path of legnth $n$ starting from $B_{L_0}(x)$}, that is a sequence $B_{L_0}(x)=B_{L_0}(x_1),...,B_{L_0}(x_n)$ of non $T$-fixed blocks such that the $x_i$, $i\in\{1,...,n\}$ are all different and for each $i\in\{1,...,n-1\}$, $B_{L_0}(x_{i+1})$ is a neighbor of $B_{L_0}(x_i)$. There are at most $8^n$ possible such paths, hence it is enough to find $T_0$ so that $8^n\max_\gamma\mathds{P}(\gamma$ is non $T_0$-fixed$)$ tends to 0 when $n$ tends to $+\infty$, where the max is taken over all possible paths of length $n$ starting from $B_{L_0}(x)$ and all $x \in \mathds{Z}^2$.
 
 Let $\gamma$ be a path of length $n$ starting from $B_{L_0}(x)$. We will study $\max_\gamma\mathds{P}(\gamma$ is non $T_0$-fixed$)$. Since the events that two neighboring blocks are good are not independent, we will define a subset of blocks in $\gamma$ which are sufficiently far apart to get independence. Remembering the definition of $K,M,M'$ at the beginning of Section \ref{sec_good_blocks}, if $\mathcal{U}$ is supercritical, we can find blocks $B_{L_0}(y_1),...,B_{L_0}(y_{\lfloor n/K \rfloor})$ in $\gamma$ so that for $i \neq j$ we have $2L_0y_i+B_{2L_0}$ and $2L_0y_j+B_{2L_0}$ disjoint, and if $\mathcal{U}$ is not supercritical, we can find blocks $B_{L_0}(y_1),...,B_{L_0}(y_{\lfloor n/K \rfloor})$ in $\gamma$ so that for $i \neq j$ we have $\|y_i-y_j\|_\infty>\frac{(4L_0M+1)M'}{L_0}$ hence $2L_0y_i+D'(4L_0M+1)$ and $2L_0y_j+D'(4L_0M+1)$ are disjoint. In order to have the same notation in the supercritical case and the non-supercritical case, we denote $B_i=2L_0y_i+B_{L_0}$, $B_i'=2L_0y_i+B_{2L_0}$ if $\mathcal{U}$ is supercritical, and $B_i=2L_0y_i+D(3L_0)$, $B_i'=2L_0y_i+D'(4L_0M+1)$ if $\mathcal{U}$ is not supercritical. 
 
 For each $i \in \{1,...,\lfloor n/K \rfloor\}$, assuming $B_{L_0}(y_i)$ is good, we will define events $\mathcal{G}_{i,\ell}$, $\ell \in \mathds{N}$ such that all sites in $B_{L_0}(y_i)$ (and some more) are at $+$ at time $\ell$ if $\mathcal{G}_{i,\ell}$ occurs. These events will be of the form ``a sequence of clock rings and choices of update rules happens that successively sets to $+$ all sites that were at $-$''. We now construct this sequence. We assume $B_{L_0}(y_i)$ is good. Then we can prove that all sites in $B_i$ are infectable by the bootstrap percolation in $B_i'$ with healthy boundary conditions and initial infected sites the sites frozen at $+$ in $B_i'$. Indeed, if $\mathcal{U}$ is supercritical this comes from the definition of a good block. If $\mathcal{U}$ is not supercritical, this is because sites in $B_i$ are infectable by the bootstrap percolation starting from sites frozen at $+$ in $2L_0y_i+D(4L_0) \subset B_i'$, hence starting from sites frozen at $+$ in $B_i'$, and in the bootstrap percolation starting from sites frozen at $+$ in $B_i'$, all sites outside $B_i'$ remain healthy since $2L_0y_i+D'(4L_0M+1)$ is constructed so the infection cannot escape it. Consequently, in both cases there exists $m_i \in \mathds{N}$, a sequence $x_{i,1},...,x_{i,m_i}$ of distinct sites in $B_i'$ and a sequence $X_{i,1},...,X_{i,m_i}$ of update rules so that all sites in $B_i$ are either equal to one of the $x_{i,j}$ or frozen at $+$, for each $j\in\{1,...,m_i\}$, $x_{i,j}+X_{i,j} \subset B_i'$, and all sites of $x_{i,j}+X_{i,j}$ are either some $x_{i,j'}$ with $j'<j$, or frozen at $+$. For each $\ell\in\mathds{N}^*$, we denote $\mathcal{G}_{i,\ell}$ the event ``in the time interval $(\ell-1,\ell]$, there are successive clock rings at each $x_{i,j}$ that is at $-$ at time $\ell-1$, in increasing order, the update rule chosen for $x_{i,j}$ is $X_{i,j}$, and no other clock ring happens in $B_i'$''. If $\mathcal{G}_{i,\ell}$ happens, sites $x_{i,1},...,x_{i,m_i}$ are at $+$ at time $\ell$ (this uses the fact that none of them is frozen at $-$). 
 Moreover, we have the following, thanks to which if $\mathcal{G}_{i,\ell}$ happens, $B_{L_0}(y_i)$ is $\ell$-fixed. 
 
 \begin{claim}\label{claim_conn_blocks}
 If $x_{i,1},...,x_{i,m_i}$ are at $+$ at time $\ell$ then all sites in $B_{L_0}(y_i)$ are well fixed at $+$ at time $\ell$. 
 \end{claim}
 
 \begin{proof}
 We first give the idea of the proof. If $\mathcal{U}$ is not supercritical, then since $B_{L_0}(y_i)$ is good, it is contained in some $2L_0y_i+D(a)$ which is good, and if $x_{i,1},...,x_{i,m_i}$ are at $+$ at time $\ell$ this $2L_0y_i+D(a)$ will be full of $+$, so by Proposition \ref{prop_key} all sites in $2L_0y_i+D(a)$ are well fixed at $+$. If $\mathcal{U}$ is supercritical and contains no disjoint update rules, if $x_{i,1},...,x_{i,m_i}$ are at $+$ at time $\ell$, then we will see no $x_{i,j}+X$, $X\in \mathcal{U}$ can be entirely at $-$, so no $x_{i,j}$ can switch to $-$. 
 
 We now write the rigorous argument. If $\mathcal{U}$ is not supercritical, if $x_{i,1},...,x_{i,m_i}$ are at $+$ at time $\ell$, then all sites in $2L_0y_i+D(3L_0)$ are at $+$ at time $\ell$. Moreover, there exists $2L_0 \leq a \leq 3L_0$ such that $2L_0y_i+D(a)$ is good. By Proposition \ref{prop_key} and the invariance by translation of the dynamics, all sites in $2L_0y_i+D(a)$ are well fixed at $+$ at time $\ell$, hence all sites in $B_{L_0}(y_i)$ are well fixed at $+$ at time $\ell$. If $\mathcal{U}$ is supercritical and contains no disjoint update rules, we assume by contradiction that $x_{i,1},...,x_{i,m_i}$ are at $+$ at time $\ell$ but some sites among $x_{i,1},...,x_{i,m_i}$ are not well fixed at $+$ at this time. Then there exists an $x_{i,j}$ that can be the first of them to change its state to $-$ after time $\ell$. Sites in $x_{i,j}+X_{i,j}$ are either frozen at $+$ or some $x_{i,j'}, j' <j$, hence are at $+$ at the time of the switch. But $\mathcal{U}$ has no disjoint update rules, thus for any $X \in \mathcal{U}$, $x_{i,j}+X$ contains at least a site at $+$ at the time of the switch, therefore $x_{i,j}$ cannot change its state to $-$. We deduce that if $x_{i,1},...,x_{i,m_i}$ are at $+$ at time $\ell$, then $x_{i,1},...,x_{i,m_i}$ are well fixed at $+$ at time $\ell$, hence all sites in $B_{L_0}(y_i)$ are well fixed at $+$ at time $\ell$. 
 \end{proof}
 
 We now study $\mathds{P}(\mathcal{G}_{i,\ell}|\mathcal{F}_{\ell-1})$ for any $i\in\{1,...,\lfloor n/K\rfloor\},\ell \in \mathds{N}^*$, and give a lower bound on it that is uniform on $i$ and $\ell$. We denote $b'$ the number of sites in $B_i'$ (which does not depend on $i$). If $B_{L_0}(y_i)$ is good then conditionally on $\mathcal{F}_{\ell-1}$, the probability that $\mathcal{G}_{i,\ell}$ occurs is the probability that in a time interval of length 1, on a given set of sites of cardinal at most $b'$, there are successive clock rings, on another given set of sites of cardinal at most $b'$ there are no clock rings, and when making a given number of update rules choices, smaller than $b'$, the results follow a given sequence. This is bigger than the probability of having successive clock rings on $b'$ sites and no clock rings on $b'$ other sites in a time interval of length 1, and obtaining a given sequence in $b'$ update rules choices. Therefore there exists $\varepsilon=\varepsilon(\mathcal{U},u_1,...,u_N,\rho^+)>0$ so that for any $i\in\{1,...,\lfloor n/K\rfloor\},\ell \in \mathds{N}^*$, we have $\mathds{1}_{\{B_{L_0}(y_i)\text{ is good}\}}\mathds{P}(\mathcal{G}_{i,\ell}|\mathcal{F}_{\ell-1}) \geq \mathds{1}_{\{B_{L_0}(y_i)\text{ is good}\}}\varepsilon$.
 
 We are now able to bound the probability that $\gamma$ is non $T$-fixed for $T \in \mathds{N}^*$. Indeed, if $\gamma$ is non $T$-fixed, for all $i\in\{1,...,\lfloor n/K\rfloor\}$, $1 \leq \ell \leq T$ so that $B_{L_0}(y_i)$ is good, $(\mathcal{G}_{i,\ell})^c$ occurs. We are going to separate according to the set of blocks $B_{L_0}(y_i)$ which are good. We denote $G_\gamma = \{i\in\{1,...,\lfloor n/K\rfloor\} \,|\, B_{L_0}(y_i)$ is good$\}$, and for $G\subset\{1,...,\lfloor n/K\rfloor\}$ we study $\mathds{P}(G_\gamma=G,\cap_{i\in G,1 \leq \ell \leq T}(\mathcal{G}_{i,\ell})^c)$. In addition, $\mathcal{G}_{i,\ell}$ depends on clock rings and update rules choices in $B_i'$, and the definition of the $x_i$ yields that the $B_i'$ are disjoint, hence the $\mathcal{G}_{i,\ell}$ are independent conditionally on $\mathcal{F}_{\ell-1}$. Consequently, $\mathds{P}(G_\gamma=G,\cap_{i\in G,1 \leq \ell \leq T}(\mathcal{G}_{i,\ell})^c) \leq (1-\varepsilon)^{|G|}\mathds{P}(G_\gamma=G,\cap_{i\in G,1 \leq \ell \leq T-1}(\mathcal{G}_{i,\ell})^c)$, hence $\mathds{P}(G_\gamma=G,\cap_{i\in G,1 \leq \ell \leq T}(\mathcal{G}_{i,\ell})^c) \leq (1-\varepsilon)^{|G|T}\mathds{P}(G_\gamma=G)$. We now estimate $\mathds{P}(G_\gamma=G)$. For any $i\in\{1,...,\lfloor n/K\rfloor\}$, the event $\{B_{L_0}(y_i)$ is good$\}$ depends only on the frozen sites inside $B_i'$ (this requires Lemma \ref{lem_technical}), and the $B_i'$ are disjoint, so these events are independent. By Proposition \ref{prop_good_blocks}, we deduce $\mathds{P}(G_\gamma=G,\cap_{i\in G,1 \leq \ell \leq T}(\mathcal{G}_{i,\ell})^c) \leq (1-\varepsilon)^{|G|T}(1/2^{5K})^{\lfloor n/K\rfloor-|G|}$. If we choose $T_0=T_0(\mathcal{U},u_1,...,u_N,\rho^+) < +\infty$ so that $(1-\varepsilon)^{T_0} \leq 1/2^{5K}$, we obtain $\mathds{P}(G_\gamma=G,\cap_{i\in G,1 \leq \ell \leq T_0}(\mathcal{G}_{i,\ell})^c) \leq (1/2^{5K})^{\lfloor n/K\rfloor}$. Since there are at most $2^n$ choices for $G$, this implies $8^n\mathds{P}(\gamma$ is non $T_0$-fixed$) \leq 8^n2^n(1/2^{5K})^{\lfloor n/K\rfloor}=2^{4n}(1/2^{5K})^{\lfloor n/K\rfloor}$, which decays to 0 when $n$ tends to $+\infty$. This ends the proof of Theorem \ref{thm_connected_components_blocks}.
 \end{proof}

 We are now in position to prove Theorem \ref{thm_fixation}.
 
 \begin{proof}[Proof of Theorem \ref{thm_fixation}.] 
  The idea behind the argument is that Theorem \ref{thm_connected_components_blocks} shows that at time $T_0$ there are finite connected components of sites that have not fixated at $+$ in an ocean of sites well fixed at $+$. When by chance the dynamics fills one of these connected components with $+$, no site at $-$ can appear near it, so all sites in the component remain at $+$ forever. We now give the rigorous argument. Let $\rho^-=0$, and $\mu$ be an initial distribution. Let $x \in \mathds{Z}^2$. Thanks to Theorem \ref{thm_connected_components_blocks}, if $x$ is not well fixed at $+$ at time $T_0$, almost surely $x$ is contained in a block that is part of a finite connected component $\mathcal{C}$ of blocks containing sites not well fixed at $+$ at time $T_0$. Then the blocks in $\mathcal{C}^c$ that are neighbors of elements of $\mathcal{C}$ contain only sites well fixed at $+$ at time $T_0$. Moreover, Theorem 1 of \cite{Balister_et_al2016} states that if all directions are stable, then $\mathcal{U}$ is subcritical and $q_c(\mathcal{U})=1$. This implies $\tilde q_c(\mathcal{U})=1$, so one cannot choose $\tilde q_c(\mathcal{U}) < \rho^+ < 1$. Therefore in our case there exists an unstable direction $u$, and some $X \in \mathcal{U}$ with $X \subset \mathds{H}_u$. We enumerate the sites in $\bigcup_{B \in \mathcal{C}} B$ as $x_1,...,x_n$ with $\langle x_i,u \rangle$ nondecreasing. Then, similarly to what was done in the proof of Theorem \ref{thm_connected_components_blocks}, for each $\ell \in \mathds{N}^*$, we define $\mathcal{G}_{\ell}$ the event ``in the time interval $(T_0+\ell-1,T_0+\ell]$, there are successive clock rings at each $x_{i}$ that is at $-$ at time $T_0+\ell-1$, in increasing order, the update rule chosen for $x_{i}$ is $X$, and no other clock ring happens in $\bigcup_{B \in \mathcal{C}} B$''. If $\mathcal{G}_\ell$ occurs, all sites of $\bigcup_{B \in \mathcal{C}} B$ are at $+$ at time $T_0+\ell$. They have then fixated at $+$ at time $T_0+\ell$, as the first site $y \in \bigcup_{B \in \mathcal{C}} B$ to switch to state $-$ would need $X'\in\mathcal{U}$ so that the sites of $y+X'$ are at state $-$. Furthermore, as in the proof of Theorem \ref{thm_connected_components_blocks}, there exists $\varepsilon>0$ random $\mathcal{F}_{T_0}$-measurable (which depends on $|\bigcup_{B \in \mathcal{C}} B|$) so that for all $\ell \in \mathds{N}^*$ we have $\mathds{1}_{\{x\text{ not well fixed at }T_0\}}\mathds{P}(\mathcal{G}_\ell|\mathcal{F}_{T_0+\ell-1}) \geq \mathds{1}_{\{x\text{ not well fixed at }T_0\}}\varepsilon$, thus for all $n \in \mathds{N}^*$ we obtain $\mathds{P}(x$ not well fixed at $T_0, \bigcap_{\ell=1}^n \mathcal{G}_\ell^c) \leq \mathds{E}((1-\varepsilon)^n)$, which converges to 0 when $n$ tends to $+\infty$ by dominated convergence, thus almost surely one of the $\mathcal{G}_\ell$ occurs. This implies $x$ almost surely fixates at $+$, which ends the proof of Theorem \ref{thm_fixation}.
 \end{proof}

 \section{Flippers: proof of Theorem \ref{thm_flippers}}\label{sec_flippers}
 
 This section is devoted to the proof of Theorem \ref{thm_flippers} on the existence or non-existence of flippers. Let $0 < \rho^- \leq \rho_0^-$ (with $\rho_0^-$ the one defined in Proposition \ref{prop_good_blocks}), and $\mu$ be an initial distribution. 
 
 \subsection{First case: $\mathcal{U}$ contains two disjoint update rules $X$ and $X'$.} Almost surely, for each $x\in \mathds{Z}^2$, $t \geq 0$, there is a clock ring at $x$ after time $t$ so that $X$ is chosen and a clock ring at $x$ after time $t$ so that $X'$ is chosen. Moreover, almost surely there is an infinite number of sites $x$ so that all sites in $x+X$ are frozen at $+$ and all sites in $x+X'$ are frozen at $-$. If $x$ is one of these sites, then $x$ is a flipper. Hence almost surely there is an infinite number of flippers.
 
 \subsection{Second case: $\mathcal{U}$ contains no disjoint update rules.} We will show that almost surely there is no flipper. The idea of the proof is to use Theorem \ref{thm_connected_components_blocks}, which states that the connected components of blocks with sites not well fixed at $+$ at time $T_0$ are finite, and to restrict our attention to one of these finite components. We will then prove that in this component, at some time all the possible flippers successively switch from $-$ to $+$. Then, if $x$ is one of these sites, since it was put to $+$, some $x+X$ with $X \in \mathcal{U}$ is at $+$. Then for any $X'\in \mathcal{U}$, since $X$ and $X'$ are not disjoint, $x+X'$ contains at least a site of $x+X$, so $x+X'$ is not entirely at $-$, thus $x$ cannot be set at $-$ hence cannot be a flipper.
 
 We now spell out the rigorous argument. Let $x \in \mathds{Z}^2$. If $x$ is well fixed at time $T_0$, then $x$ is not a flipper. Thanks to Theorem \ref{thm_connected_components_blocks}, if $x$ is not well fixed at $+$ at time $T_0$, almost surely $x$ is contained in a block that is part of a finite connected component $\mathcal{C}$ of blocks containing sites not well fixed at $+$ at time $T_0$. We denote $\mathcal{C}^\to$ the set of blocks in $\mathcal{C}^c$ that are neighbors of elements of $\mathcal{C}$. Then $\bigcup_{B \in \mathcal{C}^\to}B$ contain only sites well fixed at $+$ at time $T_0$. In this proof, we will need the concept of $\ominus$-bootstrap percolation and $\oplus$-bootstrap percolation. The $\ominus$-bootstrap percolation (respectively $\oplus$-bootstrap percolation) has the same dynamics as the usual bootstrap percolation, apart that the sites frozen at $+$ (respectively at $-$) in the $\mathcal{U}$-voter dynamics are frozen in the healthy state.
 
 \begin{claim}\label{claim_flippers}
  For any $t \geq T_0$, the $\ominus$-bootstrap percolation process starting from the sites of $\bigcup_{B \in \mathcal{C}}B$ that are at $-$ at time $t$ infects only sites in $\bigcup_{B \in \mathcal{C}}B$.
 \end{claim}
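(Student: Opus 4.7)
I proceed by induction on the step $k$ of the $\ominus$-bootstrap percolation, proving that the set $I_k$ of sites infected after $k$ steps satisfies $I_k \subset \bigcup_{B \in \mathcal{C}} B$. The base case $k = 0$ is immediate since $I_0$ is, by definition, the set of sites of $\bigcup_{B \in \mathcal{C}} B$ at state $-$ at time $t$. For the inductive step, I take $y$ newly infected at step $k+1$ through some update rule $X \in \mathcal{U}$, so that $y + X \subset I_k \subset \bigcup_{B \in \mathcal{C}} B$, and I want to show $y \in \bigcup_{B \in \mathcal{C}} B$.

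The first sub-step is a purely geometric localization of $y$. The site $y$ belongs to a unique block $B_{L_0}(z_0)$, and for any $x' \in X$ the point $y + x'$ belongs to a block $B_{L_0}(z)$ with $z \in \mathcal{C}$. Writing $y = 2 L_0 z_0 + v_0$ and $y + x' = 2 L_0 z + v$ with $\|v_0\|_\infty, \|v\|_\infty \leq L_0$, the identity $2 L_0(z - z_0) = x' + v_0 - v$ combined with $\|x'\|_2 \leq r \leq L_0$ gives $\|z - z_0\|_\infty \leq 1$. Hence either $z_0 \in \mathcal{C}$, in which case $y \in \bigcup_{B \in \mathcal{C}} B$ and the step is complete, or $B_{L_0}(z_0) \in \mathcal{C}^\to$.

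The remaining task is to exclude the second alternative. In that case $y$ belongs to a block of $\mathcal{C}^\to$, hence is well fixed at $+$ at time $T_0$, and therefore at any time $t \geq T_0$ by forward propagation of well-fixedness. To contradict this, I realize the $\ominus$-bootstrap percolation cascade as a voter-dynamics event: unfolding the cascade that leads to $y$ being infected, I extract a finite sequence of distinct non-frozen-at-$+$ sites $y_1, \dots, y_j = y$ and update rules $X_1, \dots, X_j \in \mathcal{U}$ such that $y_i + X_i \subset I_0 \cup \{y_1, \dots, y_{i-1}\}$ for each $i$. Setting $R$ to be a bounded neighborhood of $\{y_1, \dots, y_j\} \cup \bigcup_{i=1}^j (y_i + X_i)$, I consider the event $\mathcal{E}$ that on a short interval $(t, t + \varepsilon]$ the only clock rings in $R$ are successive rings at $y_1, \dots, y_j$, in this order, each with the chosen update rule being $X_i$. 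Conditionally on $\mathcal{F}_t$, $\mathcal{E}$ has positive probability; on $\mathcal{E}$, the sites of $I_0 \cap R$ remain at $-$ throughout $(t, t + \varepsilon]$, each $y_i$ switches in turn to $-$, and in particular $y$ takes state $-$ before time $t + \varepsilon$, contradicting its well-fixedness at $+$.

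The hard part is this final translation step, which combines the bootstrap cascade with the definition of well-fixedness; one must be careful to include in $R$ all sets $y_i + X_i$ so that the switches cascade correctly and to use the full strength of well-fixedness, namely that no positive-probability future event may put $y$ at state $-$. The induction and the block-level geometric confinement are routine once the scale constraint $L_0 \geq r$ from Proposition \ref{prop_good_blocks} is available.
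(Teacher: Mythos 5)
Your proposal is correct and follows essentially the same route as the paper: identify the first site outside $\bigcup_{B \in \mathcal{C}}B$ reached by the $\ominus$-bootstrap percolation, note it must lie in $\bigcup_{B \in \mathcal{C}^\to}B$ (hence be well fixed at $+$ at time $T_0$) because the infecting rule has range $r \leq L_0$, and then derive a contradiction by realizing the bootstrap cascade as a positive-probability sequence of clock rings and rule choices in the $\mathcal{U}$-voter dynamics that would put that site at $-$. The induction-on-steps framing versus the paper's minimal-counterexample framing is only a cosmetic difference.
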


 \begin{proof}
  The idea is that $\bigcup_{B \in \mathcal{C}}B$ is surrounded by $\bigcup_{B \in \mathcal{C}^\to}B$ whose sites are well fixed at $+$, so cannot be infected by $-$, hence the infection cannot escape $\bigcup_{B \in \mathcal{C}}B$. We show this by contradiction. If the claim does not hold, let $x'$ be a site of $(\bigcup_{B \in \mathcal{C}}B)^c$ infected by this $\ominus$-bootstrap percolation process at the first step at which it is possible, then $x' \in \bigcup_{B \in \mathcal{C}^\to}B$, so $x'$ is well fixed at $+$ at time $T_0$. However, the infectability of $x'$ will imply the contrary. Indeed, it implies the existence of a sequence $x_0,...,x_n=x'$ and $X_1,...,X_n \in \mathcal{U}$ so that the $x_j$, $j \in \{1,...,n\}$ are at $+$ at time $t$ but not frozen at $+$, and for any $i \in \{1,...,n\}$, the sites of $x_i+X_i$ are among the $x_j$, $j < i$ or sites of $\bigcup_{B \in \mathcal{C}}B$ at $-$ at time $t$. If after time $t$ there are successive clock rings at $x_1,...,x_n$, if the update rules chosen are $X_1,...,X_n$, and if there is no other clock ring in any of the $x_j$, $j \in \{1,...,n\}$ or in $\bigcup_{B \in \mathcal{C}}B$ during this time, which is a possible event, then $x'$ is at $-$ afterwards, which contradicts the fact it is well fixed at $+$ at time $T_0$ and proves the claim. 
  \end{proof}
 
 We now define events $\mathcal{G}_\ell$, $\ell \in \mathds{N}^*$ so that if one of the $\mathcal{G}_\ell$ occurs, there is no flipper in $\bigcup_{B \in \mathcal{C}}B$. These events are of the form ``a first sequence of clock rings and choices of update rules happens so that all sites of $\bigcup_{B \in \mathcal{C}}B$ that can switch to $-$ do so, then a second sequence so that all sites of $\bigcup_{B \in \mathcal{C}}B$ that can switch to $+$ do so''. We begin by constructing the first sequence. For any $\ell \in \mathds{N}^*$, we consider all the sites that are at $+$ at time $T_0+\ell-1$ and can be infected by the $\ominus$-bootstrap percolation starting from the sites of $\bigcup_{B \in \mathcal{C}}B$ that are at $-$ at time $T_0+\ell-1$. All these sites are in $\bigcup_{B \in \mathcal{C}}B$ by Claim \ref{claim_flippers}. We denote them $x_{\ell,1,1},...,x_{\ell,1,k_\ell^1}$, ordered so that for any $i \in \{1,...,k_\ell^1\}$, there exists $X_{\ell,1,i}\in\mathcal{U}$ so that the sites of $x_{\ell,1,i}+X_{\ell,1,i}$ are among the $x_{\ell,1,j}$, $j < i$ or sites of $\bigcup_{B \in \mathcal{C}}B$ at $-$ at time $T_0+\ell-1$. We then define $\mathcal{G}_{\ell,1}$ as the event ``in the time interval $(T_0+\ell-1,T_0+\ell-1/2]$, there are successive clock rings at $x_{\ell,1,1},...,x_{\ell,1,k_\ell^1}$ in increasing order, the update rules that are chosen are $X_{\ell,1,1},...,X_{\ell,1,k_\ell^1}$, and there is no other clock ring in $\bigcup_{B \in \mathcal{C}}B$''. We now construct a second sequence of clock rings and choices of update rules, which will allow all sites of $\bigcup_{B \in \mathcal{C}}B$ that can switch to $+$ after time $T_0+\ell-1/2$ to do so. To do that, we consider all the sites of $\bigcup_{B \in \mathcal{C}}B$ that are at $-$ at time $T_0+\ell-1/2$ and can be infected by the $\oplus$-bootstrap percolation in $\bigcup_{B \in \mathcal{C} \cup \mathcal{C}^\to}B$ starting from the sites of $\bigcup_{B \in \mathcal{C} \cup \mathcal{C}^\to}B$ that are at $+$ at time $T_0+\ell-1/2$. We denote them $x_{\ell,2,1},...,x_{\ell,2,k_\ell^2}$, ordered so that for any $i \in \{1,...,k_\ell^2\}$, there exists $X_{\ell,2,i}\in\mathcal{U}$ so that the sites of $x_{\ell,2,i}+X_{\ell,2,i}$ are among the $x_{\ell,2,j}$, $j < i$ or sites of $\bigcup_{B \in \mathcal{C} \cup \mathcal{C}^\to}B$ at $+$ at time $T_0+\ell-1/2$. We then define $\mathcal{G}_{\ell,2}$ as the event ``in the time interval $(T_0+\ell-1/2,T_0+\ell]$, there are successive clock rings at $x_{\ell,2,1},...,x_{\ell,2,k_\ell^2}$ in increasing order, the update rules that are chosen are $X_{\ell,2,1},...,X_{\ell,2,k_\ell^2}$, and there is no other clock ring in $\bigcup_{B \in \mathcal{C}}B$''. We now set $\mathcal{G}_\ell = \mathcal{G}_{\ell,1} \cap \mathcal{G}_{\ell,2}$. We will show the following.
 
 \begin{claim}
 For any $\ell\in\mathds{N}^*$, if $\mathcal{G}_\ell$ occurs, almost surely there is no flipper in $\bigcup_{B \in \mathcal{C}}B$.
 \end{claim}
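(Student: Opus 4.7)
My plan is to show that on $\mathcal{G}_\ell$, the configuration on $\bigcup_{B\in\mathcal{C}}B$ at time $T_0+\ell$ is stable in the sense that almost surely no site in $\bigcup_{B\in\mathcal{C}}B$ ever flips after time $T_0+\ell$, which is strictly stronger than the absence of flippers. Since $\mathcal{C}$ is finite by Theorem \ref{thm_connected_components_blocks}, the clock rings inside $\bigcup_{B\in\mathcal{C}}B$ form a Poisson process of finite total intensity, so their times are almost surely isolated. Consequently, if any flip occurred in $\bigcup_{B\in\mathcal{C}}B$ after $T_0+\ell$ there would be a first one at some $t^*>T_0+\ell$, and my goal is to derive a contradiction from the existence of such a $t^*$.

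The argument hinges on two properties of the configuration at time $T_0+\ell$, which I would establish on $\mathcal{G}_\ell$. \textbf{(I)} For every unfrozen $x\in\bigcup_{B\in\mathcal{C}}B$ at $+$ at time $T_0+\ell$, there is no $X\in\mathcal{U}$ with $x+X$ entirely at $-$ at time $T_0+\ell$. \textbf{(II)} For every unfrozen $x\in\bigcup_{B\in\mathcal{C}}B$ at $-$ at time $T_0+\ell$, there is no $X\in\mathcal{U}$ with $x+X$ entirely at $+$ at time $T_0+\ell$. To check (I), I would split the unfrozen $+$-sites at $T_0+\ell$ according to whether they were already at $+$ at $T_0+\ell-1/2$ or were flipped from $-$ to $+$ as one of the $x_{\ell,2,i}$ during $\mathcal{G}_{\ell,2}$. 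In the first case, the $x_{\ell,1,i}$ of $\mathcal{G}_{\ell,1}$ are defined so that after $\mathcal{G}_{\ell,1}$ the $-$-set at $T_0+\ell-1/2$ is exactly the full $\ominus$-bootstrap closure of the $-$-set at $T_0+\ell-1$; being $\ominus$-closed, it contains no update rule at $x$, and the $-$-set at $T_0+\ell$ is a subset, so a fortiori no $X\in\mathcal{U}$ has $x+X$ within it. In the second case, the rule $X_{\ell,2,i}$ used to flip $x_{\ell,2,i}$ has $x_{\ell,2,i}+X_{\ell,2,i}$ entirely at $+$ at time $T_0+\ell$: its sites either lie in $\bigcup_{B\in\mathcal{C}\cup\mathcal{C}^\to}B$ and are at $+$ at time $T_0+\ell-1/2$ (so unchanged during $\mathcal{G}_{\ell,2}$, which forbids interfering clock rings in $\bigcup_{B\in\mathcal{C}}B$ and where sites of $\bigcup_{B\in\mathcal{C}^\to}B$ are well fixed at $+$), or they are earlier $x_{\ell,2,j}$ also at $+$ at $T_0+\ell$; the no-disjoint-rules hypothesis then guarantees that every $X\in\mathcal{U}$ shares a site with $X_{\ell,2,i}$, so $x_{\ell,2,i}+X$ contains at least one site at $+$ and cannot lie in the $-$-set. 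Property (II) is proved analogously, using that $\mathcal{G}_{\ell,2}$ realises the complete $\oplus$-bootstrap closure in $\bigcup_{B\in\mathcal{C}\cup\mathcal{C}^\to}B$ of the sites at $+$ at time $T_0+\ell-1/2$.

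Once (I) and (II) are in hand, the contradiction is short. If the flip at $t^*$ is of type $+\to-$ at $x\in\bigcup_{B\in\mathcal{C}}B$, some $X\in\mathcal{U}$ has $x+X$ entirely at $-$ just before $t^*$. Since $L_0\geq r$, $x+X\subseteq\bigcup_{B\in\mathcal{C}\cup\mathcal{C}^\to}B$, and sites of $\bigcup_{B\in\mathcal{C}^\to}B$, being well fixed at $+$ at $T_0$, cannot be at $-$; hence $x+X\subseteq\bigcup_{B\in\mathcal{C}}B$. By the minimality of $t^*$, none of the sites of $\{x\}\cup(x+X)$ has flipped in $(T_0+\ell,t^*)$, so they share the same state at $T_0+\ell$ as just before $t^*$: $x$ is then unfrozen and at $+$ at $T_0+\ell$ while $x+X$ is entirely at $-$, contradicting (I). A $-\to+$ flip contradicts (II) symmetrically. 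The main obstacle is the verification of (I) for the sites $x_{\ell,2,i}$, which the $\ominus$-closure alone does not cover: this is where the no-disjoint-rules hypothesis becomes essential and where one must carefully track, via the exclusion of interfering clock rings in the definition of $\mathcal{G}_{\ell,2}$ and the well-fixedness of $\bigcup_{B\in\mathcal{C}^\to}B$, that the supporting sites of $X_{\ell,2,i}$ indeed remain at $+$ throughout $\mathcal{G}_{\ell,2}$.
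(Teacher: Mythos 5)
Your proof is correct and follows essentially the same route as the paper's: the $\ominus$- and $\oplus$-closures realized by $\mathcal{G}_{\ell,1}$ and $\mathcal{G}_{\ell,2}$, the observation that each newly flipped $x_{\ell,2,i}$ is supported by a rule $x_{\ell,2,i}+X_{\ell,2,i}$ that remains at $+$, the intersection argument from the absence of disjoint update rules, and a first-flip minimality argument. The only organizational difference is that you establish the stronger fact that the whole configuration on $\bigcup_{B \in \mathcal{C}}B$ is stable at time $T_0+\ell$ — your property (II) ruling out $-\to+$ flips is not needed in the paper, which instead tracks a putative flipper and considers the first of the sites flipped during $\mathcal{G}_{\ell,2}$ to revert to $-$.
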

 
 \begin{proof}
 The idea of the proof is that if $x'\in\bigcup_{B \in \mathcal{C}}B$ is a flipper, then it can switch to $-$ and to $+$, so if $\mathcal{G}_\ell$ occurs, $x'$ is at $-$ at time $T_0+\ell-1/2$ and is then set at $+$ before time $T_0+\ell$, so there is some $X\in\mathcal{U}$ so that $x'+X$ is at $+$, and since there are no disjoint update rules, for any $X'\in\mathcal{U}$, the set $x'+X'$ contains a site of $x'+X$ hence is not entirely at $-$, so $x'$ actually cannot switch to $-$ later, hence is not a flipper. 
 
 We now give the rigorous argument, beginning by formalizing ``if $x\in\bigcup_{B \in \mathcal{C}}B$ is a flipper, then it can switch to $-$ and to $+$''. We show by induction that any site of $\bigcup_{B \in \mathcal{C}}B$ that changes its state to $-$ after time $T_0+\ell-1$ is infectable by the $\ominus$-bootstrap percolation process starting from the sites of $\bigcup_{B \in \mathcal{C}}B$ that are at $-$ at time $T_0+\ell-1$. Indeed, if $x'$ is the $n$-th site to do so, there exists $X \in \mathcal{U}$ so that all sites in $x'+X$ are at $-$ just before the change. Since sites in $\bigcup_{B \in \mathcal{C}^\to}B$ are well fixed at $+$ at time $T_0$, $x'+X \subset \bigcup_{B \in \mathcal{C}}B$, and the sites of $x'+X$ were either at $-$ at time $T_0+\ell-1$ or changed their state to $-$ after this time. In both cases, by the induction hypothesis they are infectable by the $\ominus$-bootstrap percolation starting from the sites of $\bigcup_{B \in \mathcal{C}}B$ that are at $-$ at time $T_0+\ell-1$, hence $x'$ also is, since it is not frozen at $+$. A similar argument yields that any site in $\bigcup_{B \in \mathcal{C}}B$ that changes its state to $+$ after time $T_0+\ell-1/2$ is infectable by the $\oplus$-bootstrap percolation in $\bigcup_{B \in \mathcal{C} \cup \mathcal{C}^\to}B$ starting from the sites of $\bigcup_{B \in \mathcal{C} \cup \mathcal{C}^\to}B$ that are at $+$ at time $T_0+\ell-1/2$. Therefore if $x' \in \bigcup_{B \in \mathcal{C}}B$ is a flipper, it is infectable by these two processes. 
 
 We now assume $\mathcal{G}_\ell$ occurs and prove that almost surely there is no flipper in $\bigcup_{B \in \mathcal{C}}B$. We assume by contradiction that $x' \in \bigcup_{B \in \mathcal{C}}B$ is a flipper. Then $\mathcal{G}_{\ell,1}$ ensures $x'$ is at $-$ at time $T_0+\ell-1/2$, and $\mathcal{G}_{\ell,2}$ ensures $x'$ switches to $+$ between times $T_0+\ell-1/2$ and $T_0+\ell$. Since $x'$ is a flipper, it will switch to $-$ after time $T_0+\ell$. Let $x''$ be the first site to switch to $-$ at some time $t \geq T_0+\ell$ among the sites of $\bigcup_{B \in \mathcal{C}}B$ that switch to $+$ between times $T_0+\ell-1/2$ and $T_0+\ell$. There exists $X \in \mathcal{U}$ so that the sites in $x''+X$ were at $+$ at the time of this switch to $+$. Since $x''+X \subset \bigcup_{B \in \mathcal{C} \cup \mathcal{C}^\to}B$, since sites in $\bigcup_{B \in \mathcal{C}^\to}B$ are well fixed at time $T_0$, and by the structure of $\mathcal{G}_{2,\ell}$, sites in $x''+X$ are still at $+$ at time $T_0+\ell$. They are still at $+$ at time $t$, since if one of them had switched to $-$ before, it would have been infectable by the $\ominus$-bootstrap percolation starting from the sites of $\bigcup_{B \in \mathcal{C}}B$ that are at $-$ at time $T_0+\ell-1$, thus since $\mathcal{G}_{\ell,1}$ occurs it would be at $-$ at time $T_0+\ell-1/2$, thus it would have switched at $+$ between times $T_0+\ell-1/2$ and $T_0+\ell$, and $x''$ is the first such site to switch to $-$ after time $T_0+\ell$. This implies sites of $x''+X$ are at $+$ at time $t$. Moreover, we assumed $\mathcal{U}$ contains no disjoint update rules, so for any $X' \in \mathcal{U}$, we have that $x''+X'$ contains a site of $x''+X$, which is at $+$ at time $t$. Hence $x''$ cannot switch its state to $-$ at time $t$, so there is a contradiction, which ends the proof of the claim. 
 \end{proof}
 
 We conclude that for any $\ell\in \mathds{N}^*$, if $\mathcal{G}_\ell$ occurs, $x$ is not a flipper. Consequently, it is enough to prove that almost surely, if $x$ is not well fixed at time $T_0$, one of the $\mathcal{G}_\ell$, $\ell \in \mathds{N}^*$ occurs. Furthermore, as in the proof of Theorem \ref{thm_connected_components_blocks}, there exists $\varepsilon>0$ random $\mathcal{F}_{T_0}$-measurable (which depends on $|\bigcup_{B \in \mathcal{C}} B|$) so that for all $\ell \in \mathds{N}^*$ we have $\mathds{1}_{\{x\text{ not well fixed at }T_0\}}\mathds{P}(\mathcal{G}_\ell|\mathcal{F}_{T_0+\ell-1}) \geq \mathds{1}_{\{x\text{ not well fixed at }T_0\}}\varepsilon$, thus for all $n \in \mathds{N}^*$ we obtain $\mathds{P}(x$ not well fixed at $T_0,\bigcap_{\ell=1}^n \mathcal{G}_\ell^c) \leq \mathds{E}((1-\varepsilon)^n)$, which converges to 0 when $n$ tends to $+\infty$ by dominated convergence, thus almost surely if $x$ is not well fixed at time $T_0$ one of the $\mathcal{G}_\ell$ occurs, which ends the proof of Theorem \ref{thm_flippers}. 

\section{One-dimensional case: proof of Theorems \ref{thm_fixation_dim1} and \ref{thm_flippers_dim1}}\label{sec_dim1}

This section is devoted to the proof of Theorems \ref{thm_fixation_dim1} and \ref{thm_flippers_dim1} on one-dimensional update families. In this section, $\mathcal{U}$ will be an update family on $\mathds{Z}$. As in the two-dimensional case, we define the range of $\mathcal{U}$ as $r=r(\mathcal{U})=\max\{|x| \,|\, x \in X, X \in \mathcal{U}\}$. Let $0 < \rho^+ < 1$. The following obvious fact will be key to our proofs, replacing the two-dimensional Theorem \ref{thm_connected_components_blocks}.

\begin{lemma}
 For any $0 \leq \rho^- < 1-\rho^+$, for any $x\in \mathds{Z}$, almost surely there exists $x_r>x$ and $x_\ell<x$ so that $x_r+1,...,x_r+r$ and $x_\ell-1,...,x_\ell-r$ are frozen at $+$.
\end{lemma}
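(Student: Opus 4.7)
The plan is to prove this by a straightforward application of the second Borel--Cantelli lemma to independent blocks of consecutive sites. Since each site of $\mathds{Z}$ is frozen at $+$ independently with probability $\rho^+>0$, the probability that any given block of $r$ consecutive sites is entirely frozen at $+$ equals $(\rho^+)^r$, which is strictly positive because $\rho^+>0$ and $r$ is finite.

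To implement this, first I would partition the positive integers shifted by $x$ into the disjoint blocks $I_n=\{x+(n-1)r+1,\dots,x+nr\}$ for $n\in\mathds{N}^*$. The events $A_n=\{\text{every site of }I_n\text{ is frozen at }+\}$ depend on disjoint collections of sites, so they are mutually independent, each with probability $(\rho^+)^r$. Since $\sum_n \mathds{P}(A_n)=+\infty$, the second Borel--Cantelli lemma yields that almost surely infinitely many of the $A_n$ occur. In particular there is almost surely some $n$ with $A_n$ occurring, and then taking $x_r=x+(n-1)r$ (which is $>x$ and satisfies $x_r+1,\dots,x_r+r\in I_n$ all frozen at $+$) gives the desired site on the right.

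The symmetric argument, partitioning the sites to the left of $x$ into disjoint blocks $J_n=\{x-nr,\dots,x-(n-1)r-1\}$ and setting $x_\ell=x-(n-1)r$ when the block $J_n$ is entirely frozen at $+$, produces $x_\ell<x$ with $x_\ell-1,\dots,x_\ell-r$ all frozen at $+$. Intersecting the two almost sure events completes the proof.

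There is no real obstacle here: the only assumption being used is the independence of the freezing statuses across sites and the positivity of $\rho^+$, both of which are built into the model. The reason this lemma is highlighted as ``key'' is not its difficulty but its role: the resulting intervals of $r$ consecutive frozen-$+$ sites will play the one-dimensional analogue of the good droplets from Section~\ref{sec_good_droplet}, isolating the dynamics into finite connected components, exactly as the good-block percolation argument did in two dimensions via Theorem~\ref{thm_connected_components_blocks}.
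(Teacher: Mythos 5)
Your argument is correct and is precisely the standard one: the paper actually states this lemma without any proof, introducing it as an ``obvious fact'', and your independent-blocks / second Borel--Cantelli argument is exactly the justification left implicit. (One cosmetic point: for $n=1$ your choice $x_r=x+(n-1)r$ equals $x$ rather than exceeding it, but since you have shown that infinitely many of the $A_n$ occur almost surely, you may simply take any $n\geq 2$ among them, and likewise on the left.)
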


\begin{proof}[Proof of Theorem \ref{thm_fixation_dim1}.]
We want to prove fixation at $+$ for all sites of $\mathds{Z}$. The argument resembles the one used for the two-dimensional case: the dynamics in $\{x_\ell,...,x_r\}$ is isolated from what happens outside by the sites frozen at $+$, and once by chance $\{x_\ell,...,x_r\}$ is filled with $+$, then $x_\ell,...,x_r$ will remain at $+$. Here $\mathcal{U}$ is supercritical; we assume there exists an update rule $X \in \mathcal{U}$ so that $X \subset \{1,2,...\}$ (the case $X \subset \{...,-2,-1\}$ is similar). Let $x \in \mathds{Z}$. If $x_\ell,...,x_r$ are at $+$ at some time, they (and thus $x$) have fixated at $+$ at this time, since for $x'\in \{x_\ell,...,x_r\}$, $X'\in\mathcal{U}$, all sites of $x'+X'$ are at $+$. We now prove that almost surely there will be some time at which $x_\ell,...,x_r$ are at $+$. For any $\ell \in \mathds{N}^*$, we set $\mathcal{G}_\ell$ the event ``in the time interval $(\ell-1,\ell]$, there are successive clock rings at each site among $x_r,x_{r}-1,...,x_\ell$ that is at $-$ at time $\ell-1$, in increasing order, the update rule chosen is always $X$, and no other clock ring happens in $\{x_\ell,...,x_r\}$''. If $\mathcal{G}_\ell$ occurs, $x_\ell,...,x_r$ are at $+$ at time $\ell$. Moreover, as in the proof of Theorem \ref{thm_connected_components_blocks}, there exists $\varepsilon>0$ depending on $x_\ell,x_r$ so that $\mathds{P}(\mathcal{G}_\ell|\mathcal{F}_{\ell-1}) \geq \varepsilon$. Hence $\mathds{P}(\bigcap_{\ell=1}^n\mathcal{G}_\ell^c) \leq \mathds{E}((1-\varepsilon)^n)$ which tends to 0 when $n$ tends to $+\infty$ by dominated convergence, so almost surely some $\mathcal{G}_\ell$ occurs, then $x_\ell,...,x_r$ are at $+$ at time $\ell$, therefore $x$ fixates almost surely.
\end{proof}

\begin{proof}[Proof of Theorem \ref{thm_flippers_dim1}.]
 The argument is similar to and simpler than the one in the proof of Theorem \ref{thm_flippers}, with $\{x_\ell,...,x_r\}$ playing the role of $\bigcup_{B \in \mathcal{C}}B$ and $\{x_\ell-r,...,x_\ell-1\} \bigcup \{x_r+1,...,x_r+r\}$ playing the role of $\bigcup_{B \in \mathcal{C}^\rightarrow}B$.
\end{proof}

\end{document}